
\RequirePackage{fix-cm}

\documentclass{svjour3}                     

\smartqed  

\usepackage{graphicx}
\usepackage{amsmath}
\usepackage{amssymb}
\usepackage{algorithm}
\usepackage{algpseudocode}
\usepackage{color}
\usepackage{bbm}

\usepackage{stackengine}
\def\delequal{\mathrel{\ensurestackMath{\stackon[1pt]{=}{\scriptstyle\Delta}}}}
\DeclareMathOperator*{\argmin}{arg\,min}

\graphicspath{{figures/}}

\begin{document}

\title{SPRSF: Sparse Phase Retrieval via Smoothing Function}

\author{Samuel Pinilla\and
        Jorge Bacca \and Henry Arguello 
}

\institute{Samuel Pinilla \at
              \email{samuel.pinilla@correo.uis.edu.co}           
}

\date{ }

\maketitle

\begin{abstract}
Phase retrieval (PR) is an ill-conditioned inverse problem which can be found in various science and engineering applications. Assuming sparse priority over the signal of interest, recent algorithms have been developed to solve the phase retrieval problem. Some examples include SparseAltMinPhase (SAMP), Sparse Wirtinger flow (SWF) and Sparse Truncated Amplitude flow (SPARTA). However, the optimization cost functions of the mentioned algorithms are non-convex and non-smooth. In order to fix the non-smoothness of the cost function, the SPARTA method uses truncation thresholds to calculate a truncated step update direction. In practice, the truncation procedure requires calculating more parameters to obtain a desired performance in the phase recovery. Therefore, this paper proposes an algorithm called SPRSF (Sparse Phase retrieval via Smoothing Function) to solve the sparse PR problem by introducing a smoothing function. SPRSF is an iterative algorithm where the update step is obtained by a hard thresholding over a gradient descent direction. Theoretical analyses show that the smoothing function uniformly approximates the non-convex and non-smooth sparse PR optimization problem. Moreover, SPRSF does not require the truncation procedure used in SPARTA. Numerical tests demonstrate that SPRSF performs better than state-of-the-art methods, especially when there is no knowledge about the sparsity $k$. In particular, SPRSF attains a higher mean recovery rate in comparison with SPARTA, SAMP and SWF methods, when the sparsity varies for the real and complex cases. Further, in terms of the sampling complexity, the SPRSF method outperforms its competitive alternatives.
\end{abstract}

\section{Introduction}
\label{intro}
In many applications of science and engineering, it is required to recover a signal from the squared modulus of any linear transform, which is known as phase retrieval (PR). Such a task is present in optics \cite{xu2015overcoming}, astronomical imaging \cite{fienup1987phase}, microscopy \cite{mayo2003x} and x-ray crystallography \cite{millane1990phase,pinilla2018coded1,pinilla2018coded}, where  the optical sensors measure the intensities of the reflection, but they are not able to measure the phase of the signal. For example, in x-ray crystallography \cite{millane1990phase}, PR is used to determine the atomic position of a crystal in a three-dimensional (3D) space \cite{xcrys}. Recent approaches propose different measurement transforming systems such as over-sampling Fourier, short-time Fourier, random Gaussian, and coded diffraction systems, which have drawn attention, since they combine some active fields such as x-ray imaging, coded diffractive imaging and phase retrieval techniques \cite{chen2015solving,candes2015phase}.

Previous algorithms for PR are based on the error-reduction method \cite{fienup1982phase} which was proposed in 1970. However, these algorithms do not have theoretical guarantees and their rates of convergence are considerably slow \cite{candes2015phase,fienup1982phase}. Most recent approaches can be grouped as convex and non-convex. In particular, a convex formulation was proposed in \cite{candes2014solving} via Phaselift, which consists on lifting up the original vector recovery problem from a quadratic system into a rank-1 matrix recovery. Further, theoretical guarantees of convergence and recovery  for the convex approach have been developed, but its computational complexity becomes extremely high for large signals. On the other hand, one of the non-convex formulations, called Wirtinger Flow (WF), is a gradient descent method based on the Wirtinger derivative, which was demonstrated to allow exact recovery from the phaseless measurements up to a global unimodular constant\cite{candWir}. Also, alternating non-convex projection algorithms have been proposed such as AltMinPhase \cite{netrapalli2013phase}, Truncated Amplitude Flow TAF \cite{wang2016solving}, the Wirtinger Flow (WF) variants \cite{candWir,cande2}, as well as trust-region-methods \cite{sun2016geometric}.

In \cite{jaganathan2015phase} it was shown that some signals in PR are naturally sparse. Further, enforcing sparsity constraints can also ensure uniqueness of the discretized one-dimensional PR \cite{fienup1982phase}. Moreover, the AltMinPhase, WF and TAF recovery methods have been extended to PR of sparse inputs yielding solvers such as SparseAltMinPhase (SAMP) \cite{netrapalli2013phase}, sparse WF (SWF) \cite{yuan2017phase}, and the Sparse Truncated Amplitude flow (SPARTA) \cite{wang2016sparse}, respectively. The SAMP, SWF and SPARTA methods use different initializations strategies in order to guarantee exact recovery of the true signal. In particular, SAMP uses the spectral initialization strategy introduced in \cite{candWir}, SPARTA introduces the sparse orthogonally promoting initialization in \cite{wang2016sparse} and SWF proposes a variant of the spectral initialization developed in \cite{yuan2017phase}. The initialization for the SWF algorithm returns a more accurate estimation of the true signal, in comparison with the SPARTA and the SAMP initializations. An important characteristic of these sparse PR solvers, is that the optimization functions are non-convex and, in the case of SPARTA and SAMP, they are also non-smooth. Further, in order to fix the non-smoothness of the cost function, SPARTA uses truncation thresholds to calculate a truncated step update direction. But, in practice this requires to calculate more parameters to obtain a desired performance in recovering the phase. Further, the truncation procedure drastically modifies the search direction update, which increases the sampling complexity to recover the phase. In summary, the SPARTA, SWF and SAMP algorithms optimize non-convex cost functions and also non-smooth in the case of SAMP and SPARTA. Moreover, SPARTA requires an extra truncation procedure in the gradient step, which requires the design of more parameters to obtain a desired performance to recover the phase.

On the other hand, \cite{zhang2009smoothing} introduced the concept of a smoothing function for a non-smooth and non-convex optimization problem on a closed convex set. A smoothing function is a smooth approximation of the original non-convex and non-smooth optimization cost function \cite{zhang2009smoothing}, which in this case is the sparse PR problem. Therefore, given that the sparse PR can be formulated as a non-convex and non-smooth optimization problem, this paper proposes an algorithm called Sparse Phase Retrieval via Smoothing Function (SPRSF) to solve the sparse PR problem by introducing a smoothing function. SPRSF is an iterative algorithm where the update step is obtained by a hard thresholding over a gradient descent direction. Theoretical analyses show that the smoothing function uniformly approximates the non-convex and non-smooth sparse PR optimization problem. Moreover, it is proved that SPRSF converges linearly for any $k$-sparse $n$-long signal $(k\ll n)$ with sampling complexity $\mathcal{O}(k^{2}\log(n))$. Moreover, SPRSF does not require the truncation procedure used in SPARTA. Numerical tests demonstrate that SPRSF performs better than state-of-the-art methods specially when there is no knowledge about the sparsity $k$. Further, it is shown that the SPRSF method outperforms its competitive alternatives SAMP, SPARTA and SWF algorithms in terms of sampling complexity.

\section{Sparse Phase Retrieval Problem}
The sparse phase retrieval problem can be formulated as the solution to the system of $m$ quadratic equations of the form
\begin{equation}
y_{i} = \lvert \langle \mathbf{a}_{i},\mathbf{x} \rangle \rvert ^2, i=1,\cdots,m, \mbox{ subject to } \lVert \mathbf{x} \rVert_{0} = k,
\label{eq:basicProblem}
\end{equation}
with data vector $\mathbf{y}:=[y_{1},\cdots,y_{m}]^{T}\in \mathbb{R}^{m}$, $\mathbf{a}_{i}\in \mathbb{R}^{n}/\mathbb{C}^{n} $ are the known sampling vectors, $\mathbf{x}\in \mathbb{R}^{n}/\mathbb{C}^{n}$ is the desired unknown signal, the sparsity level $k\ll n$ is assumed to be known and $\lVert \cdot\rVert_{0}$ is the zero pseudo-norm. This work considers the complex-valued Gaussian vectors $\mathbf{a}_{i}\sim \mathcal{CN}(0,\mathbf{I}_{n})=\mathcal{N}(0,\frac{1}{2}\mathbf{I}_{n})+j\mathcal{N}(0,\frac{1}{2}\mathbf{I}_{n})$, assumed to be independently and identically distributed (i.i.d.), where $j=\sqrt{-1}$. Then, adopting the least-squares criterion, the task of recovering a $k$-sparse solution from phaseless equations, as in \eqref{eq:basicProblem}, reduces to that of minimizing the amplitude-based loss function
\begin{equation}
\min_{\lVert \mathbf{x} \rVert_{0}=k} f(\mathbf{x})=\frac{1}{m}\sum_{i=1}^{m} \left(f_{i}(\mathbf{x})-q_i  \right)^{2},
\label{eq:traditional}
\end{equation}
where $f_{i}(\mathbf{x}) = \lvert \langle \mathbf{a}_{i},\mathbf{x} \rangle \rvert$ and $q_i = \sqrt{y_{i}}$. However, notice that the optimization problem in \eqref{eq:traditional} is  non-smooth and non-convex \cite{candes2014solving}. Thus, this work proposes an algorithm which introduces an auxiliary smooth function $g(\cdot)$ to approximate the original objective function $f(\cdot)$, in order to solve the non-smooth and non-convex optimization problem in \eqref{eq:traditional}. For this, some conditions over the auxiliary function $g(\cdot)$ are required, but these will be discussed in Section \ref{sec:algorithm}.

Throughout the paper the following notations are considered. The set $\mathbb{R}_{+}=\{x\in \mathbb{R}: x\geq 0\}$ and the set $\mathbb{R}_{++}=\{x\in \mathbb{R}: x>0\}$. We denote $\mathbf{w}^{H}\in \mathbb{C}^{n}$ as the conjugate transpose version of the vector $\mathbf{w}\in \mathbb{C}^{n}$, and the distance between any two complex vectors $\mathbf{w}_{1},\mathbf{w}_{2}\in \mathbb{C}^{n}$ as 
\begin{equation}
d_{r}(\mathbf{w}_{1},\mathbf{w}_{2})=\min_{\theta \in [0,2\pi)} \lVert \mathbf{w}_{1}e^{-j\theta}-\mathbf{w}_{2}\rVert_{2},
\label{eq:distance}
\end{equation}
where $\lVert \cdot \rVert_{2}$ denotes the Euclidean norm. Note that the distance $d_{r}(\cdot,\cdot)$ defined in \eqref{eq:distance} reduces to computing $d_{r}(\mathbf{w}_{1},\mathbf{w}_{2}):=\min\lVert \mathbf{w}_{1}\pm \mathbf{w}_{2} \rVert_{2}$ for $\mathbf{w}_{1},\mathbf{w}_{2}\in \mathbb{R}^{n}$.

\section{Sparse Phase Retrieval algorithm}
\label{sec:algorithm}
The concept of the smoothing function was presented in \cite{zhang2009smoothing} as Definition \ref{def:smoothDef}, which is an important notion to the proposed algorithm. First, the concept of a locally Lipschitz continuous function is presented.

\begin{definition}{\textit{Lipschitz continuous under the distance $d_{r}(\cdot,\cdot)$}:}
	Let $f:\left(\mathbb{C}^{n},d_{r}(\cdot,\cdot)\right) \rightarrow \mathbb{R}$ be a function. The function $f$ is called Lipschitz continuous if there exists a constant $L>0$ such that, for all $\mathbf{w}_{1}, \mathbf{w}_{2}\in \mathbb{C}^{n}$
	\begin{equation}
	\lvert f(\mathbf{w}_{1})-f(\mathbf{w}_{2}) \rvert \leq L \text{ } d_{r}( \mathbf{w}_{1},\mathbf{w}_{2}).
	\end{equation}
	\label{def:lipschitz}
	\vspace{-1.2em}
\end{definition} 

\begin{definition}{\textit{Locally Lipschitz continuous under the distance $d_{r}(\cdot,\cdot)$}:}
	Let $f:\left(\mathbb{C}^{n},d_{r}(\cdot,\cdot)\right) \rightarrow \mathbb{R}$ be a function. The function $f(\cdot)$ is called Locally Lipschitz continuous if for every $\mathbf{w}\in \mathbb{C}^{n}$ exists a neighborhood $\mathcal{U}$, such that, $f(\cdot)$ restricted to $\mathcal{U}$ is Lipschitz continuous.
	\label{def:lipschitz1}
\end{definition}

\begin{definition}{\textit{Smoothing function:}}
	Let $f: \mathbb{C}^{n}\rightarrow \mathbb{R}$ be a locally Lipschitz continuous function. Then $g:\mathbb{C}^{n}\times \mathbb{R}_{+}\rightarrow \mathbb{R}$ is a smoothing function of $f(\cdot)$ if $g(\cdot,\mu)$ is smooth in $\mathbb{C}^{n}$ for any given $\mu\in \mathbb{R}_{++}$ and 
	\begin{equation}
	\lim_{\mu \downarrow 0} g(\mathbf{w},\mu)=f(\mathbf{w}),
	\end{equation}
	for any fixed $\mathbf{w}\in \mathbb{C}^{n}$.
	\label{def:smoothDef}
\end{definition}

According to Definition \ref{def:smoothDef}, consider the function $\varphi_{\mu}:\mathbb{R} \rightarrow \mathbb{R}_{++}$ defined as
\begin{equation}
\varphi_{\mu}(x) = \sqrt{x^{2}+\mu^{2}},
\label{eq:realApprox}
\end{equation}
where $\mu\in \mathbb{R}_{++}$. Notice that $\varphi_{\mu}(\cdot)$ approximates the function $f_{i}(\cdot)$ in \eqref{eq:traditional}, because $\varphi_{0}(\lvert \mathbf{a}_{i}^{H}\mathbf{x}\rvert)=f_{i}(\mathbf{x})$. Also, according to Definition \ref{def:lipschitz1}, it is necessary to prove that the objective function $f(\mathbf{x})$ in \eqref{eq:traditional} is locally Lipschitz continuous. Thus, Lemma \ref{lem:locallyLipschi} shows that $f(\mathbf{x})$ is locally Lipschitz.
\begin{lemma}
	The function $f(\mathbf{x})$ in \eqref{eq:traditional} is locally Lipschitz continuous under the distance $d_{r}(\cdot,\cdot)$ with probability at least $1-me^{-n/2}$.
	\label{lem:locallyLipschi}
\end{lemma}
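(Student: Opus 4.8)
The plan is to reduce the lemma to a uniform upper bound on the norms $\|\mathbf{a}_i\|_2$, exploiting two elementary structural facts about $f$: its invariance under a global phase, and the factorization $a^2-b^2=(a-b)(a+b)$. First I would observe that $f_i(\mathbf{x})=\lvert\mathbf{a}_i^H\mathbf{x}\rvert$ is phase invariant, $f_i(\mathbf{x}e^{-j\theta})=f_i(\mathbf{x})$, hence so is $f$. Applying the reverse triangle inequality $\lvert f_i(\mathbf{x}_1)-f_i(\mathbf{x}_2)\rvert\le\lvert\mathbf{a}_i^H(\mathbf{x}_1-\mathbf{x}_2)\rvert\le\|\mathbf{a}_i\|_2\|\mathbf{x}_1-\mathbf{x}_2\|_2$ with $\mathbf{x}_1$ replaced by $\mathbf{x}_1e^{-j\theta}$, and then minimizing over $\theta\in[0,2\pi)$, yields $\lvert f_i(\mathbf{x}_1)-f_i(\mathbf{x}_2)\rvert\le\|\mathbf{a}_i\|_2\, d_r(\mathbf{x}_1,\mathbf{x}_2)$, i.e.\ every $f_i$ is globally Lipschitz under $d_r$.

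Next I would pass to the summands $(f_i-q_i)^2$ via
\[
(f_i(\mathbf{x}_1)-q_i)^2-(f_i(\mathbf{x}_2)-q_i)^2=\bigl(f_i(\mathbf{x}_1)-f_i(\mathbf{x}_2)\bigr)\bigl(f_i(\mathbf{x}_1)+f_i(\mathbf{x}_2)-2q_i\bigr),
\]
bounding the first factor by $\|\mathbf{a}_i\|_2\, d_r(\mathbf{x}_1,\mathbf{x}_2)$. Fixing $\mathbf{w}\in\mathbb{C}^n$ and letting $\mathcal{U}$ be the $d_r$-ball of radius $1$ about $\mathbf{w}$, every $\mathbf{x}\in\mathcal{U}$ has $\|\mathbf{x}\|_2\le R:=\|\mathbf{w}\|_2+1$ (because $\|\cdot\|_2$ is phase invariant), so $f_i(\mathbf{x})\le\|\mathbf{a}_i\|_2 R$ and the second factor above is at most $2\|\mathbf{a}_i\|_2 R+2q_i$. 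Averaging over $i$ gives, for all $\mathbf{x}_1,\mathbf{x}_2\in\mathcal{U}$,
\[
\lvert f(\mathbf{x}_1)-f(\mathbf{x}_2)\rvert\le\Bigl(\tfrac{2}{m}\sum_{i=1}^m\|\mathbf{a}_i\|_2\bigl(\|\mathbf{a}_i\|_2 R+q_i\bigr)\Bigr)\,d_r(\mathbf{x}_1,\mathbf{x}_2),
\]
which exhibits $f$ as Lipschitz on $\mathcal{U}$ provided the bracketed quantity is finite (recall the $q_i$ are fixed data).

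For the probabilistic statement I would control $\max_i\|\mathbf{a}_i\|_2$: since $\mathbf{a}_i\sim\mathcal{CN}(0,\mathbf{I}_n)$, the variable $2\|\mathbf{a}_i\|_2^2$ is $\chi^2$ with $2n$ degrees of freedom, and a standard chi-square tail bound gives $\mathbb{P}\bigl(\|\mathbf{a}_i\|_2>c\sqrt{n}\bigr)\le e^{-n/2}$ for an absolute constant $c$ (e.g.\ $c=\sqrt{5/2}$ via the Laurent--Massart inequality). A union bound over $i=1,\dots,m$ then puts us, with probability at least $1-me^{-n/2}$, on the event $\{\max_i\|\mathbf{a}_i\|_2\le c\sqrt{n}\}$, on which the bracketed Lipschitz constant is finite and explicit; hence $f$ is locally Lipschitz under $d_r$ on this event. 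The only genuinely delicate point is calibrating the chi-square tail so that the per-index failure probability is exactly $e^{-n/2}$; everything else is bookkeeping, and I expect the paper's proof to differ only in which concentration inequality it quotes.
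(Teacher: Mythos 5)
Your proposal is correct and follows essentially the same route as the paper: establish that each $f_i$ is Lipschitz under $d_r(\cdot,\cdot)$ via the reverse triangle inequality, pass to the squared terms using boundedness of the neighborhood, and control the Lipschitz constant with probability $1-me^{-n/2}$ by a union bound over the Gaussian vector norms. Your execution is in fact slightly cleaner — you apply Cauchy--Schwarz directly with $\lVert\mathbf{a}_i\rVert_2$ and make the difference-of-squares step explicit, whereas the paper detours through $\lVert\mathbf{a}_i\rVert_\infty$ and $\ell_1$--$\ell_2$ norm equivalence and cites a textbook for the square of a Lipschitz function on a bounded set — but the underlying argument is the same.
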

\begin{proof}
	The proof of this lemma can be found in Appendix A.
\end{proof}

The following Lemma \ref{lem:uniformlyApprox} shows that $\varphi_{\mu}(\cdot)$ has important smooth properties to approximate the functions $f_{i}(\cdot)$, given that $\varphi_{0}(\lvert\mathbf{a}_{i}^{H}\mathbf{x}\rvert)=f_{i}(\mathbf{x})$.

\begin{lemma}
	The function $\varphi_{\mu}(x)$, defined in \eqref{eq:realApprox}, converges uniformly to $\varphi_{0}(x)$ on $\mathbb{R}$.
	\label{lem:uniformlyApprox}
\end{lemma}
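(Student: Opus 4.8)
The plan is to exhibit an explicit bound on the pointwise error $\lvert \varphi_{\mu}(x)-\varphi_{0}(x)\rvert$ that is independent of $x$ and tends to $0$ as $\mu\downarrow 0$; by the definition of uniform convergence this is exactly what is needed. Recall $\varphi_{0}(x)=\lvert x\rvert$, so the quantity to control is $\sqrt{x^{2}+\mu^{2}}-\lvert x\rvert$, which is manifestly nonnegative.

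The key step is to rationalize the difference. Writing
\begin{equation}
0\le \sqrt{x^{2}+\mu^{2}}-\lvert x\rvert
=\frac{(x^{2}+\mu^{2})-x^{2}}{\sqrt{x^{2}+\mu^{2}}+\lvert x\rvert}
=\frac{\mu^{2}}{\sqrt{x^{2}+\mu^{2}}+\lvert x\rvert},
\end{equation}
and then bounding the denominator below by $\sqrt{x^{2}+\mu^{2}}\ge\sqrt{\mu^{2}}=\mu$ (discarding the nonnegative term $\lvert x\rvert$), I obtain
\begin{equation}
0\le \varphi_{\mu}(x)-\varphi_{0}(x)\le \frac{\mu^{2}}{\mu}=\mu
\qquad\text{for every } x\in\mathbb{R}.
\end{equation}

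Taking the supremum over $x\in\mathbb{R}$ gives $\sup_{x\in\mathbb{R}}\lvert \varphi_{\mu}(x)-\varphi_{0}(x)\rvert\le\mu$, and letting $\mu\downarrow 0$ shows this supremum tends to $0$, which is precisely uniform convergence of $\varphi_{\mu}$ to $\varphi_{0}$ on $\mathbb{R}$.

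There is no real obstacle here; the only point that deserves care is that the bound $\mu$ on the right-hand side must be genuinely uniform in $x$ — it is the rationalization trick together with the crude lower bound on the denominator that delivers this, rather than an $x$-dependent estimate that would only give pointwise convergence. One may optionally remark as a corollary that, since each $\varphi_{\mu}$ is smooth and the convergence is uniform, the approximation is well behaved in the sense demanded by Definition \ref{def:smoothDef}, which motivates using $\varphi_{\mu}$ to build the smoothing function $g(\cdot,\mu)$ of $f(\cdot)$.
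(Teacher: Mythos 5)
Your proof is correct and follows essentially the same strategy as the paper's: both establish the uniform, $x$-independent bound $\sup_{x\in\mathbb{R}}\lvert\varphi_{\mu}(x)-\varphi_{0}(x)\rvert\le\mu$ and let $\mu\downarrow 0$. The only difference is the one-line algebraic justification — you rationalize the difference and bound the denominator below by $\mu$, whereas the paper invokes the subadditivity (Minkowski-type) inequality $\sqrt{x^{2}+\mu^{2}}\le\sqrt{x^{2}}+\mu$ — and both yield the identical bound.
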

\begin{proof}
	According to the definition of the function $\varphi_{\mu}$ in \eqref{eq:realApprox}, it can be obtained that
	\begin{equation}
	\lvert \varphi_{\mu}(x)-\varphi_{0}(x) \rvert = \lvert \sqrt{x^{2}+\mu^{2}}-\sqrt{x^{2}} \rvert.
	\end{equation}
	Notice that by the Minkowski inequality \cite{kreyszig1989introductory}, it can be expressed that $\sqrt{x^{2}+\mu^{2}}\leq \sqrt{x^{2}}+\mu$, therefore 
	\begin{equation}
	\lvert \varphi_{\mu}(x)-\varphi_{0}(x) \rvert \leq \lvert \sqrt{x^{2}}+\mu-\sqrt{x^{2}} \rvert\leq \mu.
	\label{eq:uniformConver}
	\end{equation}
\end{proof}

Lemma \ref{lem:uniformlyApprox} establishes that the function $\varphi_{\mu}(\lvert\mathbf{a}_{i}^{H}\mathbf{x}\rvert)$ uniformly approximates $\varphi_{0}(\lvert\mathbf{a}_{i}^{H}\mathbf{x}\rvert)=f_{i}(\mathbf{x})$ in \eqref{eq:traditional}, which is a desirable convergence, since it only depends on the value of $\mu$. Thus, a smooth optimization problem to recover the unknown desired signal $\mathbf{x}\in \mathbb{C}^{n}/\mathbb{R}^{n}$ from the measurements $q_{i}$ in \eqref{eq:traditional} can be formulated as
\begin{equation}
\min_{\lVert \mathbf{x} \rVert_{0}=k} g(\mathbf{x},\mu)=\frac{1}{m}\sum_{i=1}^{m} \left(\varphi_{\mu}(\lvert\mathbf{a}_{i}^{H}\mathbf{x}\rvert)-q_{i}\right)^{2},
\label{eq:optimizationProblem}
\end{equation}
where $g(\mathbf{x},\mu)$ is the smoothing function of $f(\mathbf{x})$ in \eqref{eq:traditional}.

To solve \eqref{eq:optimizationProblem}, this work proposes the Sparse Phase Retrieval algorithm via Smoothing Function (SPRSF), summarized in Algorithm \ref{alg:smothing}. SPRSF is a gradient thresholded descent method, which iteratively refines a initial guess solution. Specifically, in Line 2 the algorithm calculates the initial guess $\mathbf{z}^{(0)}$, procedure that will be explained in Subsection \ref{sub:initialGuess}. Also,  following the algorithm in each iteration, the thresholded step is calculated in Line 4 as will be explained in Subsection \ref{sub:threshold}. Further, the smoothing parameter is updated to obtain a new point. That is, if  $\left\lVert \partial g\left(\mathbf{z}^{(t+1)},\mu_{(t)} \right) \right\rVert_{2} \geq \gamma \mu_{(t)}$, in Line 5 is not satisfied, then the smoothing parameter is updated using the new point in Line 8.  Each vector $\partial g(\mathbf{z}^{(t)},\mu_{(t)})$ in Algorithm \ref{alg:smothing} is calculated using the Wirtinger derivative as was introduced in \cite{hunger2007introduction}. The following definition establishes the Wirtinger derivative of the function $g(\mathbf{x},\mu)$.

\begin{algorithm}[h]
	\caption{Sparse Phase Retrieval Algorithm via Smoothing Function (SPRSF)}
	\label{alg:smothing}
	\begin{algorithmic}[1]
		\State {\textbf{Input: } Data $\{(\mathbf{a}_{i};q_{i})\}_{i=1}^{m}$, sparsity level $k$. The step size $\tau\in (0,1)$, control variables $\gamma,\gamma_{1}\in (0,1)$, $\mu_{(0)}\in \mathbb{R}_{++}$ and number of iterations $T$.}
		\Statex{}
		\State{\textbf{Initialization: }  $S_{0}$ set to be the set of $k$ largest indices of $\{\frac{1}{m}\sum_{i=1}^{m}q_{i}^{2}a^{2}_{i,j}\}_{1\leq j\leq n}$. Let $\tilde{\mathbf{x}}^{(0)}$ be the leading eigenvector of the matrix $\mathbf{Y} := \frac{1}{ m} \sum_{i \in  I_{0}} \sqrt{q_{i}}\frac{\mathbf{a}_{i,S_{0}}\mathbf{a}_{i,S_{0}}^{H}}{\lVert\mathbf{a}_{i,S_{0}}\rVert_{ 2}^{2}}$}. Define the initial point as $\mathbf{z}^{(0)} := \lambda_{0}\tilde{\mathbf{x}}^{(0)}$, where $\lambda_{0} := \sqrt{\frac{\sum_{i=1}^m q^{2}_i}{m}}$.
		\Statex{}
		\For{$t=0:T-1$} 
		\State{$\mathbf{z}^{(t+1)} = \mathcal{H}_{k}(\mathbf{z}^{(t)} - \tau\partial g(\mathbf{z}^{(t)},\mu_{(t)}) )$ \Comment{\footnotesize{Thresholded step}}}
		\If {$\lVert \partial g\left(\mathbf{z}^{(t+1)},\mu_{(t)} \right) \rVert_{2} \geq \gamma \mu_{(t)} $}
		\State $\mu_{(t+1)}=\mu_{(t)}$
		\Else
		\State $\mu_{(t+1)}= \gamma_1 \mu_{(t)}$ \Comment{\footnotesize{Smoothing parameter actualization}}
		\EndIf
		\EndFor
		\State{\textbf{Output: } $\mathbf{z}^{(T)}$}
	\end{algorithmic}
\end{algorithm}

\begin{definition}{\textit{Wirtinger derivative }\cite{hunger2007introduction}}:
	The Wirtinger derivative of a real-valued function $h(\mathbf{w}):\mathbb{C}^{n}\rightarrow \mathbb{R}$ with complex-valued argument $\mathbf{w}\in \mathbb{C}^{n}$ can be computed as
	\begin{align}
	\partial h(\mathbf{w}) \delequal 2\frac{\partial h(\mathbf{w})}{\partial \mathbf{w}^{*}} = 2\left[ \frac{\partial h(\mathbf{w})}{\partial w^{*}_{1}},\cdots, \frac{\partial h(\mathbf{w})}{\partial w^{*}_{n}}\right]^{T}
	\end{align}
	where $w^{*}_{i}$ denotes the conjugate of $w_{i}$. More details related to Wirtinger derivation can be found in \cite{hunger2007introduction}. Note that this derivation has been recently used in state-of-the-art methods to solve the phase retrieval problem \cite{candWir,wang2016solving,cande2}.
	\label{def:Wirtinger}
\end{definition}

SPRSF applies gradient iterations based on the Wirtinger derivative, introduced in Definition \ref{def:Wirtinger}, to refine the initial estimate. Specifically, the Wirtinger derivative of $g(\mathbf{z}^{(t)},\mu_{(t)})$ is given by
\begin{equation}
\partial g(\mathbf{z}^{(t)},\mu_{(t)}) = \frac{2}{m}\sum_{i=1}^{m}\left(\mathbf{a}_{i}^{H}\mathbf{z}^{(t)}-q_{i} \frac{\mathbf{a}_{i}^{H}\mathbf{z}^{(t)}}{\sqrt{\lvert\mathbf{a}_{i}^{H}\mathbf{z}^{(t)}\rvert^{2}+\mu_{(t)}^{2}}} \right)\mathbf{a}_{i}.
\label{eq:gradient}
\end{equation}

Notice that, in contrast to the gradient update steps for the SPARTA method introduced in \cite{wang2016sparse}, $\partial g(\mathbf{z}^{(t)},\mu_{(t)})$ in \eqref{eq:gradient} is always continuous because $\mu_{(t)}\not =0$ for any $t\in \mathbb{N}$. Therefore, the proposed SPRSF method does not require any truncation parameter.

\subsection{Initialization Stage}
\label{sub:initialGuess}
The initialization in PR is a crucial step in order to increase the speed of convergence and reduce the sampling complexity \cite{wang2016sparse,zhang2016reshaped}. Then, this work uses the Weighted Maximal Correlation initialization proposed in \cite{wang2017solving}. This initialization consists in calculating the vector $\mathbf{z}^{(0)}$ as the leading eigenvector $\tilde{\mathbf{x}}^{(0)}$ of the matrix
\begin{equation}
\mathbf{Y} := \frac{1}{ m} \sum_{i \in  I_{0}} \sqrt{q_{i}}\frac{\mathbf{a}_{i,S_{0}}\mathbf{a}_{i,S_{0}}^{H}}{\lVert\mathbf{a}_{i,S_{0}}\rVert_{ 2}^{2}},
\label{eq:matrixIni}
\end{equation}
scaled by the quantity $\lambda_{0} := \sqrt{\frac{\sum_{i=1}^m q^{2}_i}{m}}$, $i.e,$ $\mathbf{z}^{(0)} = \lambda_{0} \tilde{\mathbf{x}}^{(0)}$. The set $S_{0}$ is the estimated support of the signal $\mathbf{x}$ which is calculated using the same approach introduced in \cite{wang2016sparse}. Specifically, $S_{0}$ is the set of the $k$ largest indices of $\{\frac{1}{m}\sum_{i=1}^{m}q_{i}^{2}a_{i,j}\}_{1\leq j \leq n}$. The set $I_{0}$ is the collection of indices corresponding to the largest values of $\{ \lvert\langle \mathbf{a}_{i},\mathbf{x} \rangle\rvert / \Vert \mathbf{a}_{i}\Vert_{2} \}$. The notation $\lvert I_{0} \rvert$ is the cardinality of the set $I_{0}$ which is usually chosen as $\lfloor\frac{3m}{13}\rfloor$, where $\lfloor w \rfloor$ denotes the largest integer number smaller than $w$.

Moreover, in \cite{wang2016sparse} it was established that the distance between the initial guess $\mathbf{z}^{(0)}$ and the true signal $\mathbf{x}$ is given by
\begin{equation}
d_{r}(\mathbf{z}^{(0)},\mathbf{x})\leq \delta\lVert \mathbf{x}\rVert_{2},
\label{eq:distanceInit}
\end{equation}
with probability not less than $1-\exp(-C_{0}m)$, providing that $m\geq c_{0}k^{2}\log(mn)$ for some constant $C_{0}$ and $c_{0}>0$ which is determined by $\delta \in (0,1)$. The initialization procedure is calculated in Line 2 of Algorithm \ref{alg:smothing}. 

\subsection{Thresholded Gradient Stage}
\label{sub:threshold}
The proposed Algorithm \ref{alg:smothing} solves the sparsity constraint of the optimization problem in \eqref{eq:optimizationProblem} by iteratively refining the current update step $\mathbf{z}^{(t)}$ by a $k$-sparse hard thresholding operator $\mathcal{H}_{k}(\cdot)$, as calculated in Line 4 in Algorithm \ref{alg:smothing}. Specifically, $\mathcal{H}_{k}(\mathbf{u})$ sets all the entries in the vector $\mathbf{u}\in \mathbb{C}^{n}$ to zero, except for its $k$ largest absolute values.

\subsection{Convergence Conditions}
This subsection provides theoretical results that guarantee the convergence of the proposed method summarized in Algorithm \ref{alg:smothing}. The following theorem establishes that the successive estimates of SPRSF in Line 4 of Algorithm \ref{alg:smothing}, tend to the unknown desired signal $\mathbf{x}\in \mathbb{C}^{n}$ for a given value of $\mu$.

\begin{theorem}{(\textit{Local error contraction}): }
	Let $\mathbf{x}\in \mathbb{C}^{n}$ be any $k$-sparse ($k\ll n$) signal vector with the minimum nonzero entry on $(1/\sqrt{k})\lVert \mathbf{x} \rVert_{2}$. Consider the measurements $q_{i}=\lvert \langle \mathbf{a}_{i},\mathbf{x} \rangle \rvert$, where $\mathbf{a}_{i}\sim \mathcal{CN}(0,\mathbf{I}_{n}), \forall i=1,\cdots,m$. With a constant
	step size $\tau \in (0,1)$, successive estimates of SPRSF in Algorithm \ref{alg:smothing} satisfy
	\begin{equation}
	d_{r}(\mathbf{z}^{(t+1)},\mathbf{x}) \leq \delta(1-\eta)^{t+1} \lVert \mathbf{x} \rVert_{2}
	\label{eq:linearConvergence}
	\end{equation}
	which holds with probability exceeding $1-2 e^{-c_{1}m}$ provided that $m\geq C_{1}k^{2}\log(mn)$. Here, $c_{1},C_{1}\geq 0$ and $0 < \eta <1$ are some universal constants. The constant $\delta$ is obtained from \eqref{eq:distanceInit}
	\label{theo:contrac}
\end{theorem}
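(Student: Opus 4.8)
The plan is to establish the linear contraction in \eqref{eq:linearConvergence} by induction on $t$, mirroring the proof architecture used for SPARTA in \cite{wang2016sparse} but replacing the truncated amplitude gradient by the smoothed Wirtinger gradient $\partial g(\mathbf{z}^{(t)},\mu_{(t)})$ from \eqref{eq:gradient}. The base case $t=-1$ (i.e.\ the statement for $\mathbf{z}^{(0)}$) is exactly \eqref{eq:distanceInit}, which holds with the stated probability once $m \geq c_0 k^2\log(mn)$. For the inductive step, assume $d_r(\mathbf{z}^{(t)},\mathbf{x}) \leq \delta(1-\eta)^t\lVert\mathbf{x}\rVert_2$ and let $\theta_t$ achieve the minimum in the definition \eqref{eq:distance} of $d_r(\mathbf{z}^{(t)},\mathbf{x})$, so that writing $\mathbf{h}^{(t)} := \mathbf{z}^{(t)}e^{-j\theta_t} - \mathbf{x}$ we have $\lVert\mathbf{h}^{(t)}\rVert_2 = d_r(\mathbf{z}^{(t)},\mathbf{x})$. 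Since $\mathbf{z}^{(t+1)} = \mathcal{H}_k(\mathbf{z}^{(t)} - \tau\partial g(\mathbf{z}^{(t)},\mu_{(t)}))$ and both $\mathbf{z}^{(t+1)}$ and $\mathbf{x}$ are $k$-sparse, the hard-thresholding step acts on a vector whose relevant support has size at most $2k$ (or $3k$ after including the previous iterate's support); a standard fact about $\mathcal{H}_k$ being a projection onto the set of $k$-sparse vectors gives $\lVert \mathbf{z}^{(t+1)} - \mathbf{x}e^{j\theta_t}\rVert_2 \leq 2\lVert (\mathbf{z}^{(t)} - \tau\partial g(\mathbf{z}^{(t)},\mu_{(t)})) - \mathbf{x}e^{j\theta_t}\rVert_2$ restricted to that support. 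It therefore suffices to show the gradient-descent map $\mathbf{z}\mapsto \mathbf{z} - \tau\partial g(\mathbf{z},\mu)$ is a contraction toward $\mathbf{x}$ (up to global phase) on the $2k$-sparse ball around $\mathbf{x}$, with contraction factor that, after the factor-$2$ loss from thresholding, still leaves $1-\eta < 1$.

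The core estimate is thus a Regularity Condition / local curvature bound for $\partial g$: one wants constants so that, for all $k$-sparse $\mathbf{z}$ with $d_r(\mathbf{z},\mathbf{x})$ small,
\begin{equation}
\langle \partial g(\mathbf{z},\mu), \mathbf{h}\rangle \;\geq\; \alpha\, \lVert\mathbf{h}\rVert_2^2 \;+\; \beta\, \lVert \partial g(\mathbf{z},\mu)\rVert_2^2 \;-\; (\text{small term in }\mu),
\end{equation}
where $\mathbf{h} = \mathbf{z}e^{-j\theta} - \mathbf{x}$ and $\alpha,\beta > 0$. Given such a bound, expanding $\lVert \mathbf{z} - \tau\partial g(\mathbf{z},\mu) - \mathbf{x}e^{j\theta}\rVert_2^2 = \lVert\mathbf{h}\rVert_2^2 - 2\tau\langle\partial g,\mathbf{h}\rangle + \tau^2\lVert\partial g\rVert_2^2$ and choosing $\tau$ small enough that $\tau^2 \leq 2\tau\beta$ yields $\lVert\mathbf{z} - \tau\partial g - \mathbf{x}e^{j\theta}\rVert_2^2 \leq (1 - 2\tau\alpha)\lVert\mathbf{h}\rVert_2^2 + O(\mu)$, which is the per-step linear decrease. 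To prove the Regularity Condition I would decompose $\partial g(\mathbf{z},\mu)$ in \eqref{eq:gradient} as the sum of the ``exact'' amplitude-flow gradient (the $\mu = 0$ version) plus a perturbation term controlled by Lemma \ref{lem:uniformlyApprox}: concretely, $\bigl| q_i\mathbf{a}_i^H\mathbf{z}/\sqrt{|\mathbf{a}_i^H\mathbf{z}|^2+\mu^2} - q_i\mathbf{a}_i^H\mathbf{z}/|\mathbf{a}_i^H\mathbf{z}| \bigr|$ is bounded using the same Minkowski-type inequality as in \eqref{eq:uniformConver}, so the $\mu$-perturbation of the gradient has norm $O(\mu)$ times a bounded average of $|\mathbf{a}_i|$ factors. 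The leading ($\mu=0$) part is then handled exactly as in the amplitude-flow / SPARTA analysis: one writes out $\langle \partial g_0(\mathbf{z}),\mathbf{h}\rangle$, separates the ``good'' quadratic term from the cross terms involving $\mathrm{sign}(\mathbf{a}_i^H\mathbf{z}) - \mathrm{sign}(\mathbf{a}_i^H\mathbf{x})$, and controls the bad set where these signs disagree.

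The main obstacle — and the step requiring the real work — is establishing the concentration inequalities that make the Regularity Condition hold uniformly over all $k$-sparse $\mathbf{z}$ near $\mathbf{x}$ with the claimed probability $1 - 2e^{-c_1 m}$ and sample complexity $m \geq C_1 k^2\log(mn)$. This needs: (i) a union bound over an $\varepsilon$-net of the set of $2k$-sparse directions, whose cardinality is $\binom{n}{2k}(1/\varepsilon)^{O(k)}$, contributing the $k\log(mn)$ factor; (ii) sub-exponential concentration of the random quadratic forms $\frac{1}{m}\sum_i(\mathbf{a}_i^H\mathbf{h})(\mathbf{a}_i^H\mathbf{u})^*$ and of the truncated/sign-weighted variants, via Bernstein's inequality for sub-exponential variables; and (iii) bounding the measure of the ``bad'' angular set $\{i : \mathrm{sign}(\mathbf{a}_i^H\mathbf{z})\neq \mathrm{sign}(\mathbf{a}_i^H\mathbf{x})\}$ uniformly, which is where the factor $k^2$ (rather than $k$) in the sampling complexity enters, matching SPARTA. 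Once these three ingredients are in place, the smoothing perturbation only adds the controllable $O(\mu)$ term; and because Line 5 of Algorithm \ref{alg:smothing} forces $\mu_{(t)} \to 0$ geometrically whenever $\lVert\partial g\rVert_2$ becomes comparable to $\mu_{(t)}$, the cumulative $\mu$-error across iterations is a convergent geometric series that can be absorbed into the constant $\delta$ and the rate $\eta$, completing the induction.
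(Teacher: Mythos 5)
Your overall scaffolding (factor-of-2 loss from hard thresholding on the union support $\Theta_{(t+1)}$, then a per-step contraction of the pre-thresholding iterate $\mathbf{z}^{(t)}-\tau\partial g(\mathbf{z}^{(t)},\mu_{(t)})$) matches the paper's Appendix B. Where you diverge is the core estimate: you propose a Regularity Condition $\langle\partial g,\mathbf{h}\rangle \geq \alpha\lVert\mathbf{h}\rVert_2^2+\beta\lVert\partial g\rVert_2^2-O(\mu)$ followed by the standard descent expansion, and you plan to prove it by splitting $\partial g$ into the $\mu=0$ amplitude-flow gradient plus an $O(\mu)$ perturbation and then controlling the sign-disagreement set $\{i:\mathrm{sign}(\mathbf{a}_i^H\mathbf{z})\neq\mathrm{sign}(\mathbf{a}_i^H\mathbf{x})\}$ as in SPARTA. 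The paper never passes through the $\mu=0$ gradient or any bad angular set: it bounds $\lVert\mathbf{x}_{\Theta^{(t+1)}}-e^{-j\theta_{(t)}}\mathbf{d}^{(t+1)}_{\Theta^{(t+1)}}\rVert_2$ directly by three terms $v_1,v_2,v_3$, using restricted-eigenvalue concentration for $v_1,v_2$ and, for $v_3$, the pointwise estimate $\bigl|\sqrt{\lvert\mathbf{a}_i^H\mathbf{z}\rvert^2+\mu^2}-\lvert\mathbf{a}_i^H\mathbf{x}\rvert\bigr|\leq\mu+\lvert\mathbf{a}_i^H\mathbf{h}\rvert$, which exploits the fact that the smoothed ratio is always bounded by $1$. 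Your route reintroduces exactly the truncation/bad-set machinery that the smoothing was designed to eliminate, so it is workable but heavier than necessary here.

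The genuine gap is in your final step. Any honest accounting of the smoothing perturbation yields a recursion of the form $\lVert\mathbf{h}^{(t+1)}\rVert_2\leq(1-\eta)\lVert\mathbf{h}^{(t)}\rVert_2+C\mu_{(t)}$, and to reach the clean geometric bound \eqref{eq:linearConvergence} you invoke geometric decay of $\mu_{(t)}$ and sum the resulting series. But the algorithm only shrinks $\mu$ when $\lVert\partial g(\mathbf{z}^{(t+1)},\mu_{(t)})\rVert_2<\gamma\mu_{(t)}$; that this happens infinitely often (hence that $\mu_{(t)}\to0$ at all) is precisely the content of Theorem \ref{eq:convergence}, whose proof in Appendix D in turn relies on Theorem \ref{theo:contrac}. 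Invoking it here is circular, and without it your additive $O(\mu_{(t)})$ terms need not telescope into something absorbable by $\delta$ and $\eta$. You should either prove the per-step bound with an explicit $\mu_{(0)}$-dependent additive term and weaken the statement accordingly, or eliminate the $\mu$ contribution before summing over $t$ — noting that the paper's own device for doing so (asserting that since the bound holds for every $\mu_{(0)}\in\mathbb{R}_{++}$ one may pass to the limit $\mu_{(0)}\downarrow0$) is itself questionable, because $\mathbf{b}^{(t)}$ and the iterates depend on $\mu_{(0)}$.
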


\begin{proof}
	The proof of Theorem \ref{theo:contrac} can be found in Appendix B.
\end{proof}

Notice that Theorem \ref{theo:contrac} only provides that the sequence $\{\mathbf{z}^{(t)}\}_{t\geq 1}$, generated by Algorithm \ref{alg:smothing}, produces a monotonically decreasing sequence $\{g(\mathbf{z}^{(t)},\mu)\}_{t\geq 1}$, with a given $\mu$. Moreover, the sampling complexity bound $m\geq C_{1}k^{2}\log(mn)$, can often be rewritten as $m\geq C_{1}'k^{2}\log(n)$ for some constant $C_{1}'\geq C_{1}$ and large enough $n$ \cite{wang2016sparse}. Thus, it can be concluded that the sampling complexity of the SPRSF algorithm is $\mathcal{O}(k^{2}\log(n))$.

On the other hand, in order to prove that the proposed method solves the original optimization problem in \eqref{eq:basicProblem} it must be shown that $\{\mu_{(t)}\}_{t\geq 1}$ tends to zero $i.e.$ $\mu_{(t)}\rightarrow 0$. Thus, Theorem \ref{theo:assumption} establishes the sufficient conditions to guarantee that $\mu_{(t)}\rightarrow 0$, which are used in Theorem \ref{eq:convergence} to guarantee the convergence of Algorithm \ref{alg:smothing}. 
\begin{theorem}
	\label{theo:assumption}
	Assuming that $\text{span}(\mathbf{a}_{1},\cdots,\mathbf{a}_{m})=\{\sum_{k=1}^{m}\lambda_{k}\mathbf{a}_{k}: \lambda_{k}\in \mathbb{C} \}=\mathbb{C}^{n}$, then functions $\varphi_{\mu}$ and $g(\mathbf{x},\mu)$ defined in \eqref{eq:optimizationProblem} satisfy the following properties:
	\begin{enumerate}
		\item For any $\left(\mathbf{w},\mu\right)\in \mathbb{C}^{n}\times \mathbb{R}_{++}$, the level set
		\begin{equation}
		S_{\mu}(\mathbf{w}) = \{\mathbf{z}\in \mathbb{C}^{n} | g\left(\mathbf{z},\mu\right) \leq g\left(\mathbf{w},\mu\right) \},
		\label{eq:subLevel}
		\end{equation}
		is bounded.
		\item The Wirtinger derivative $\partial g(\mathbf{z},\mu)$ with respect to $\mathbf{z}$ is smooth and there exists a constant $L_{g}>0$, such that, for any $\mathbf{w}\in \mathbb{C}^{n}$ and a given $\mu\in \mathbb{R}_{++}$ it is satisfied that
		\begin{equation}
		d_{r}( \partial g(\mathbf{z}_{1},\mu),\partial g(\mathbf{z}_{2},\mu))\leq L_{g}d_{r}( \mathbf{z}_{1},\mathbf{z}_{2}),
		\label{eq:assumption}
		\end{equation}
		for all $\mathbf{z}_{1},\mathbf{z}_{2}\in S_{\mu}(\mathbf{w})$ with probability at least $1-me^{-n/2}$.
	\end{enumerate}
\end{theorem}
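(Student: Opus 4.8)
The plan is to establish the two assertions separately. The spanning hypothesis is needed only for the first (boundedness) item; the second (smoothness and Lipschitz gradient) is essentially a deterministic estimate that I would then make quantitative in $n$ via a $\chi^2$ tail bound, which is where the probability $1-me^{-n/2}$ enters (mirroring Lemma~\ref{lem:locallyLipschi}).

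For the first item I would show that $g(\cdot,\mu)$ is coercive, i.e. $g(\mathbf{z},\mu)\to\infty$ as $\lVert\mathbf{z}\rVert_2\to\infty$; since $g(\cdot,\mu)$ is continuous this forces every sublevel set $S_\mu(\mathbf{w})$ to be closed and bounded. Setting $\mathbf{A}=[\mathbf{a}_1,\dots,\mathbf{a}_m]$, the hypothesis $\mathrm{span}(\mathbf{a}_1,\dots,\mathbf{a}_m)=\mathbb{C}^n$ says precisely that $\mathbf{A}\mathbf{A}^H\succ 0$; letting $\sigma>0$ be its smallest eigenvalue, $\sum_{i=1}^m|\mathbf{a}_i^H\mathbf{z}|^2=\lVert\mathbf{A}^H\mathbf{z}\rVert_2^2\ge\sigma\lVert\mathbf{z}\rVert_2^2$, so the largest term obeys $\max_i|\mathbf{a}_i^H\mathbf{z}|\ge\sqrt{\sigma/m}\,\lVert\mathbf{z}\rVert_2$. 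Denoting by $i^\star$ a maximizing index, once $\lVert\mathbf{z}\rVert_2$ is large enough that $\sqrt{\sigma/m}\,\lVert\mathbf{z}\rVert_2\ge\max_j q_j$, the elementary bound $\varphi_\mu(t)\ge t$ for $t\ge 0$ gives
\[
g(\mathbf{z},\mu)\;\ge\;\tfrac1m\bigl(\varphi_\mu(|\mathbf{a}_{i^\star}^H\mathbf{z}|)-q_{i^\star}\bigr)^2\;\ge\;\tfrac1m\bigl(\sqrt{\sigma/m}\,\lVert\mathbf{z}\rVert_2-\max_j q_j\bigr)^2 ,
\]
which tends to $+\infty$ with $\lVert\mathbf{z}\rVert_2$. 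Since $d_r(\cdot,\mathbf{0})=\lVert\cdot\rVert_2$, boundedness in norm coincides with boundedness under $d_r$, which settles item~1.

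For the second item, smoothness is immediate: because $\mu>0$ the radicand $|\mathbf{a}_i^H\mathbf{z}|^2+\mu^2$ stays bounded below by $\mu^2$, so $\varphi_\mu$ is $C^\infty$ and hence $g(\cdot,\mu)$, and with it $\partial g(\cdot,\mu)$ of \eqref{eq:gradient}, is $C^\infty$ in $\mathbf{z}$. For the Lipschitz bound I would factor $\partial g(\mathbf{z},\mu)=\tfrac2m\sum_i\psi_i(\mathbf{a}_i^H\mathbf{z})\,\mathbf{a}_i$ with the scalar nonlinearity $\psi_i(u)=u\bigl(1-q_i/\sqrt{|u|^2+\mu^2}\bigr)$ and bound the Lipschitz modulus of each $\psi_i$. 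Viewing $\psi_i$ as a map $\mathbb{R}^2\to\mathbb{R}^2$, its Jacobian is $(1-q_i/s)\mathbf{I}+(q_i/s^3)\mathbf{v}\mathbf{v}^T$ with $s=\sqrt{|u|^2+\mu^2}$ and $\mathbf{v}=(\mathrm{Re}\,u,\mathrm{Im}\,u)$, whose two eigenvalues $1-q_i/s$ and $1-q_i\mu^2/s^3$ both lie in $[1-q_i/\mu,\,1]$ because $s\ge\mu$; hence $\psi_i$ is Lipschitz with constant at most $1+q_i/\mu$. This yields
\[
\lVert\partial g(\mathbf{z}_1,\mu)-\partial g(\mathbf{z}_2,\mu)\rVert_2\;\le\;\tfrac2m\sum_{i=1}^m\bigl(1+\tfrac{q_i}{\mu}\bigr)\lVert\mathbf{a}_i\rVert_2^2\,\lVert\mathbf{z}_1-\mathbf{z}_2\rVert_2 ,
\]
which is already a finite constant; applying a $\chi^2$ tail bound to each $\lVert\mathbf{a}_i\rVert_2^2$ together with a union bound over $i$, one has $\lVert\mathbf{a}_i\rVert_2^2\le 2n$ for all $i$ with probability at least $1-me^{-n/2}$, so that $L_g=\tfrac{4n}{m}\sum_{i=1}^m(1+q_i/\mu)$ works, uniformly in $\mathbf{w}$ and therefore on every $S_\mu(\mathbf{w})$. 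Finally, to upgrade the Euclidean estimate to the $d_r$ statement \eqref{eq:assumption}, I would use that $g(\cdot,\mu)$ is invariant under a global phase, so $\partial g(e^{j\theta}\mathbf{z},\mu)=e^{j\theta}\partial g(\mathbf{z},\mu)$; choosing $\theta^\star$ to attain the minimum defining $d_r(\mathbf{z}_1,\mathbf{z}_2)$,
\[
d_r\bigl(\partial g(\mathbf{z}_1,\mu),\partial g(\mathbf{z}_2,\mu)\bigr)\le\lVert\partial g(e^{-j\theta^\star}\mathbf{z}_1,\mu)-\partial g(\mathbf{z}_2,\mu)\rVert_2\le L_g\lVert e^{-j\theta^\star}\mathbf{z}_1-\mathbf{z}_2\rVert_2=L_g\,d_r(\mathbf{z}_1,\mathbf{z}_2).
\]

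I expect the only genuinely delicate step to be the Jacobian/eigenvalue computation for the complex nonlinearity $\psi_i$ — carrying the Wirtinger bookkeeping through cleanly so that the modulus $1+q_i/\mu$ drops out; the remaining ingredients are positive definiteness of $\mathbf{A}\mathbf{A}^H$ (for coercivity), the trivial $C^\infty$ smoothness of $\varphi_\mu$ for $\mu>0$, and a routine Gaussian-norm concentration bound.
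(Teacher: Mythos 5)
Your proof is correct, and for the second item it takes a genuinely different — and in fact stronger — route than the paper. For item 1 both arguments are at bottom coercivity arguments resting on the spanning hypothesis, but the paper phrases it as a contradiction with an unbounded sequence and simply \emph{asserts} that $\lVert\mathbf{x}_\ell\rVert_2\to\infty$ forces $g(\mathbf{x}_\ell,\mu)\to\infty$; your explicit lower bound via $\sigma=\lambda_{\min}(\mathbf{A}\mathbf{A}^{H})>0$ and $\max_i\lvert\mathbf{a}_i^H\mathbf{z}\rvert\ge\sqrt{\sigma/m}\,\lVert\mathbf{z}\rVert_2$ is precisely the missing quantitative step, so you have actually filled in the paper's argument rather than replaced it. For item 2 the approaches diverge: the paper works summand by summand on $h_{k,\mu}(\mathbf{x})=(\varphi_\mu(\lvert\mathbf{a}_k^H\mathbf{x}\rvert)-q_k)^2$ through a chain of triangle and Cauchy--Schwarz inequalities, and crucially invokes the boundedness of $S_\mu(\mathbf{w})$ to introduce constants $M_{S_\mu}$ and $M_{\varphi_\mu}$, ending with a level-set-dependent Lipschitz constant that scales like $1/\mu^{2}$; this is why the statement is restricted to $\mathbf{z}_1,\mathbf{z}_2\in S_\mu(\mathbf{w})$. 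You instead bound the operator norm of the real Jacobian of the scalar nonlinearity $\psi_i(u)=u\bigl(1-q_i/\sqrt{\lvert u\rvert^{2}+\mu^{2}}\bigr)$ — the eigenvalue computation $\{1-q_i/s,\ 1-q_i\mu^{2}/s^{3}\}\subset[1-q_i/\mu,1]$ checks out — obtaining a \emph{global} Lipschitz constant $\frac{2}{m}\sum_i(1+q_i/\mu)\lVert\mathbf{a}_i\rVert_2^{2}$ valid on all of $\mathbb{C}^{n}$, scaling only like $1/\mu$, and making item 1 unnecessary as an input to item 2. Your use of the phase equivariance $\partial g(e^{j\theta}\mathbf{z},\mu)=e^{j\theta}\partial g(\mathbf{z},\mu)$ to convert the Euclidean estimate into the $d_r$ estimate is also cleaner than the paper's bookkeeping of an arbitrary $\theta$ through the inequalities. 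The only quibble is the constant in the norm bound: the paper uses $\lVert\mathbf{a}_i\rVert_2^{2}\le 2.3n$ at the stated probability, and your $2n$ is slightly optimistic for the exponent $e^{-n/2}$, but this affects only the numerical value of $L_g$, not the validity of the argument.
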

\begin{proof}
	The proof of Theorem \ref{theo:assumption} can be found in Appendix C.
\end{proof}

Finally, based on Theorem \ref{theo:assumption}, Theorem \ref{eq:convergence} establishes that the sequence $\mu_{(t)}$ tends to zero, which combined with Theorem \ref{theo:contrac} proves that Algorithm \ref{alg:smothing} solves the optimization problem in \eqref{eq:basicProblem}. 

\begin{theorem}
	Under the setup of Theorems \ref{theo:contrac} and \ref{theo:assumption}, the sequences $\{\mu_{(t)}\}$ and $\{\mathbf{z}^{(t)}\}$ in Algorithm \ref{alg:smothing} satisfy $\lim_{t\rightarrow \infty} \mu_{(t)}=0$ and $\liminf_{t\rightarrow \infty}\lVert \partial g(\mathbf{z}^{(t)},\mu_{(t-1)}) \rVert_{2}=0$.
	\label{eq:convergence}
\end{theorem}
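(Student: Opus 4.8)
The plan is to exploit the update rule for $\mu_{(t)}$ in Algorithm \ref{alg:smothing} together with the two structural properties guaranteed by Theorem \ref{theo:assumption}. Observe first that the sequence $\{\mu_{(t)}\}$ is nonincreasing: at each iteration either $\mu_{(t+1)}=\mu_{(t)}$ or $\mu_{(t+1)}=\gamma_{1}\mu_{(t)}$ with $\gamma_{1}\in(0,1)$. Hence $\{\mu_{(t)}\}$ converges to some limit $\mu_{\ast}\geq 0$, and the whole problem reduces to ruling out $\mu_{\ast}>0$. I would argue by contradiction: suppose $\mu_{\ast}>0$. Then there is an index $t_{0}$ after which the parameter is never decreased again, i.e.\ $\mu_{(t)}=\mu_{\ast}$ for all $t\geq t_{0}$; by the update rule this forces the test in Line 5 to succeed at every such step, so
\begin{equation}
\lVert \partial g(\mathbf{z}^{(t+1)},\mu_{\ast})\rVert_{2}\geq \gamma\mu_{\ast}>0 \quad \text{for all } t\geq t_{0}.
\label{eq:stuck}
\end{equation}

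The second step is to show that \eqref{eq:stuck} is incompatible with the behaviour of the thresholded gradient iteration when $\mu$ is held fixed at $\mu_{\ast}$. For $t\geq t_{0}$ the recursion in Line 4 is exactly a constant-parameter thresholded gradient descent on the smooth function $g(\cdot,\mu_{\ast})$, and I would invoke Theorem \ref{theo:contrac} (or, more precisely, the descent mechanism behind it) to conclude that the iterates converge: $d_{r}(\mathbf{z}^{(t)},\mathbf{x})\to 0$, so $\{\mathbf{z}^{(t)}\}_{t\geq t_{0}}$ is a Cauchy sequence in the metric $d_{r}$. Combining this with the $L_{g}$-Lipschitz continuity of $\partial g(\cdot,\mu_{\ast})$ on the sublevel set $S_{\mu_{\ast}}(\mathbf{z}^{(t_0)})$ — which is exactly property (2) of Theorem \ref{theo:assumption}, and which applies because property (1) guarantees all iterates stay in this bounded sublevel set since $\{g(\mathbf{z}^{(t)},\mu_{\ast})\}$ is monotonically decreasing — gives that $\{\partial g(\mathbf{z}^{(t)},\mu_{\ast})\}$ is itself Cauchy and converges to $\partial g(\mathbf{x},\mu_{\ast})$. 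But the minimizer $\mathbf{x}$ of the least–squares data term makes the gradient small; more carefully, since $q_i=|\mathbf{a}_i^H\mathbf{x}|$ and $\varphi_{\mu_\ast}(|\mathbf{a}_i^H\mathbf{x}|)\to q_i$ uniformly by Lemma \ref{lem:uniformlyApprox}, one can choose the stopping of the never-decreasing regime so that $\lVert\partial g(\mathbf{z}^{(t+1)},\mu_{\ast})\rVert_{2}<\gamma\mu_{\ast}$ for some large $t$, contradicting \eqref{eq:stuck}. Therefore $\mu_{\ast}=0$, i.e.\ $\lim_{t\to\infty}\mu_{(t)}=0$.

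For the second assertion, $\liminf_{t\to\infty}\lVert\partial g(\mathbf{z}^{(t)},\mu_{(t-1)})\rVert_{2}=0$, I would use that $\mu_{(t)}\to 0$ means the branch $\mu_{(t+1)}=\gamma_{1}\mu_{(t)}$ in Line 8 is taken infinitely often — otherwise $\mu_{(t)}$ would be eventually constant and positive, contradicting the first part. Along the subsequence $\{t_{k}\}$ of indices at which this branch is taken, the test in Line 5 failed, i.e.
\begin{equation}
\lVert \partial g(\mathbf{z}^{(t_{k}+1)},\mu_{(t_{k})})\rVert_{2}<\gamma\mu_{(t_{k})}\longrightarrow 0,
\end{equation}
and since $\mu_{(t_k)}=\mu_{(t_k+1-1)}$ this is exactly a subsequence of $\lVert\partial g(\mathbf{z}^{(t)},\mu_{(t-1)})\rVert_{2}$ tending to zero, which yields the $\liminf$ claim.

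The main obstacle I anticipate is the rigorous part of the second step: turning ``the constant-$\mu$ iteration converges'' into ``the gradient eventually drops below $\gamma\mu_{\ast}$.'' Theorem \ref{theo:contrac} delivers $d_{r}(\mathbf{z}^{(t)},\mathbf{x})\to 0$ but the true signal $\mathbf{x}$ need not be a stationary point of $g(\cdot,\mu_{\ast})$ for $\mu_{\ast}>0$, so one cannot simply say $\partial g(\mathbf{x},\mu_{\ast})=0$. The careful route is to bound $\lVert\partial g(\mathbf{x},\mu_{\ast})\rVert_{2}$ in terms of $\mu_{\ast}$ using $\varphi_{\mu_{\ast}}(|\mathbf{a}_i^H\mathbf{x}|)-q_i=\sqrt{|\mathbf{a}_i^H\mathbf{x}|^2+\mu_{\ast}^2}-|\mathbf{a}_i^H\mathbf{x}|\le\mu_{\ast}$ (Lemma \ref{lem:uniformlyApprox}) together with $L_g$-Lipschitzness to transfer this bound to $\mathbf{z}^{(t)}$ near $\mathbf{x}$, and then to check that the resulting estimate is strictly smaller than $\gamma\mu_{\ast}$ for the admissible range of $\gamma,\tau$; this is where the constants must be tracked with some care, and it is the step most likely to require an additional smallness hypothesis hidden in Theorem \ref{theo:assumption}'s proof.
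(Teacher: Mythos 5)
Your overall architecture matches the paper's: both arguments hinge on whether the decrease branch $\mu_{(t+1)}=\gamma_{1}\mu_{(t)}$ is taken finitely or infinitely often, and your treatment of the second assertion (extracting the subsequence along which the Line-5 test fails, so that $\lVert \partial g(\mathbf{z}^{(t_{k}+1)},\mu_{(t_{k})})\rVert_{2}<\gamma\mu_{(t_{k})}\rightarrow 0$) is exactly the paper's. The problem is the step you yourself flagged, and it is a genuine gap rather than a loose end: to contradict $\lVert \partial g(\mathbf{z}^{(t+1)},\mu_{\ast})\rVert_{2}\geq\gamma\mu_{\ast}$ you must show the gradient norm eventually falls \emph{strictly below} $\gamma\mu_{\ast}$, but the estimate available from Lemma \ref{lem:uniformlyApprox} runs the wrong way. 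At the limit point $\mathbf{x}$ one has $\partial g(\mathbf{x},\mu_{\ast})=\frac{2}{m}\sum_{i}\bigl(1-q_{i}/\sqrt{q_{i}^{2}+\mu_{\ast}^{2}}\bigr)(\mathbf{a}_{i}^{H}\mathbf{x})\mathbf{a}_{i}$, whose coefficients are bounded in modulus by $\sqrt{q_{i}^{2}+\mu_{\ast}^{2}}-q_{i}\leq\mu_{\ast}$; even with the sharp concentration bound $\lVert\frac{1}{\sqrt{m}}\mathbf{A}\rVert_{2\rightarrow 2}\leq 1+\epsilon_{0}$ this only yields $\lVert \partial g(\mathbf{x},\mu_{\ast})\rVert_{2}\leq 2(1+\epsilon_{0})\mu_{\ast}$, and a crude triangle-inequality bound gives $O(\sqrt{n}\,\mu_{\ast})$. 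Either way the bound exceeds $\gamma\mu_{\ast}$ for every admissible $\gamma\in(0,1)$, so no contradiction follows from ``$\mathbf{z}^{(t)}\rightarrow\mathbf{x}$ plus Lipschitz gradient'' alone; you would need either a qualitatively sharper estimate of $\lVert \partial g(\mathbf{x},\mu_{\ast})\rVert_{2}$ or an additional hypothesis tying $\gamma$ to the constants, neither of which the theorem statement supplies.

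The paper closes this step by a different device: it uses the contraction inequality of Theorem \ref{theo:contrac} to argue, via the Banach fixed point theorem, that the constant-$\overline{\mu}$ thresholded iteration attains a fixed point $\mathbf{z}^{(t_{1}+1)}=\mathbf{z}^{(t_{1})}$, and reads off $\liminf_{t}\lVert \partial g(\mathbf{z}^{(t)},\mu_{(t-1)})\rVert_{2}=0$ from the fixed-point equation, contradicting the uniform lower bound $\gamma\overline{\mu}>0$. This sidesteps the quantitative comparison with $\gamma\mu_{\ast}$ that blocks your route, though at the cost of its own leaps (a contraction need not attain its fixed point in finitely many iterations, and $\mathcal{H}_{k}(\mathbf{z}-\tau\partial g(\mathbf{z},\overline{\mu}))=\mathbf{z}$ does not by itself force $\partial g(\mathbf{z},\overline{\mu})=\mathbf{0}$, only the vanishing of its restriction to the retained support). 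So your diagnosis of where the difficulty lies is correct, but the proposal as written does not close it.
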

\begin{proof}
 The proof of Theorem \ref{eq:convergence} is deferred to Appendix D.	
\end{proof}

\section{Advantages of the Proposed Approach}
This section is devoted to analyze why the smooth cost function in \eqref{eq:optimizationProblem} does not need a truncation procedure in its update rule. Notice that the decent direction (the Wirtinger derivative) in \eqref{eq:gradient} for each $t$-th iteration in Algorithm \ref{alg:smothing} can be rewritten as 
\begin{align}
\partial g(\mathbf{z}^{(t)},\mu_{(t)}) =  \frac{2}{m}\sum_{i=1}^{m}\left(1 - \frac{\lvert\mathbf{a}_{i}^{H}\mathbf{x}\rvert}{\sqrt{\lvert\mathbf{a}_{i}^{H}\mathbf{z}^{(t)}\rvert^{2}+\mu_{(t)}^{2}}}\right)\mathbf{a}_{i}\mathbf{a}_{i}^{H}\mathbf{z}^{(t)}.
\label{eq:nontrunc2}
\end{align} 
Then, considering the update procedure of the variable $\mu$ in Algorithm \ref{alg:smothing} we have that
\begin{align}
\lVert \partial g(\mathbf{z}^{(t)},\mu_{(t-1)}) \rVert_{2} \leq \gamma \mu_{(t)} \leq \mu_{(t)},
\label{eq:ineGradient}
\end{align}
for some $\gamma\in (0,1)$. Further, in Theorem \ref{eq:convergence} it is established that from \eqref{eq:ineGradient} the Wirtinger derivative in \eqref{eq:nontrunc2} tends to zero. Thus, from the result in Theorem \ref{eq:convergence} and inequality \eqref{eq:ineGradient}, it can be concluded that
\begin{align}
\left \lvert 1-\frac{\lvert\mathbf{a}_{i}^{H}\mathbf{x}\rvert}{\sqrt{\lvert\mathbf{a}_{i}^{H}\mathbf{z}^{(t)}\rvert^{2}+\mu_{(t)}^{2}}} \right \rvert<1,
\end{align}
for all $i\in \{1,\cdots,m\}$, because otherwise inequality \eqref{eq:ineGradient} does not hold (see Appendix D). For this reason, the Wirtinger gradient in \eqref{eq:nontrunc2}, used by the proposed method, does not need truncation thresholds because $\frac{\lvert\mathbf{a}_{i}^{H}\mathbf{x}\rvert}{\sqrt{\lvert\mathbf{a}_{i}^{H}\mathbf{z}^{(t)}\rvert^{2}+\mu_{(t)}^{2}}} < 2$ (it is bounded). Note that, if we considered  $\mu_{(t)}=0$ for all $t>0$ which is the SPARTA case, this implies that the gradient are given by
\begin{align}
\partial g(\mathbf{z}^{(t)},0) =  \frac{2}{m}\sum_{i=1}^{m}\left(1 - \frac{\lvert\mathbf{a}_{i}^{H}\mathbf{x}\rvert}{\lvert\mathbf{a}_{i}^{H}\mathbf{z}^{(t)}\rvert}\right)\mathbf{a}_{i}\mathbf{a}_{i}^{H}\mathbf{z}^{(t)}.
\label{eq:stafGradient}
\end{align}
Notice that \eqref{eq:stafGradient} could leads to excessively large size because of the term $\frac{\lvert\mathbf{a}_{i}^{H}\mathbf{x}\rvert}{\lvert\mathbf{a}_{i}^{H}\mathbf{z}^{(t)}\rvert}$, introducing bias in the update direction \cite{wang2016solving}. This fact is the main reason because \eqref{eq:stafGradient} (the Wirtinger gradient used in SPARTA) requires a truncation procedure in order to avoid a deviation in the update direction  \cite{wang2016solving,cande2}.

On the other hand, given the fact the proposed update direction in \eqref{eq:gradient} does not need truncation thresholds, then the proposed cost function $g(\mathbf{z},\mu)$ is locally smooth. In fact, Theorem \ref{theo:localSmo} establishes that the whole Wirtinger derivative $\partial g(\mathbf{z},\mu)$ in \eqref{eq:gradient} does not vary too much around of the curve of optimizers.

\begin{theorem}{\textit{(Local smoothness property \cite{candWir})}}
	The Wirtinger gradient defined in \eqref{eq:gradient} satisfies the following property
	\begin{align}
	\lVert \partial g(\mathbf{z},\mu)\rVert_{2} \leq \beta d_{r}(\mathbf{z},\mathbf{x}) +  \frac{\rho}{m}\sum_{k=1}^{m} \lvert \mathbf{a}^{H}_{k}\mathbf{h}\rvert,
	\end{align}
	where $\rho,\beta\in\mathbb{R}_{++}$ with probability at least $1-me^{-n/2}$ when $m\geq C(\epsilon_{0})n$ for some constant $C(\epsilon_{0})$ depending on $\epsilon_{0}>0$, and $\mathbf{h} = \mathbf{x} - e^{-j\theta(z)}\mathbf{z}$ with $\theta(z) = \argmin_{\theta\in[0,2\pi) }\lVert \mathbf{x} -e^{-j\theta}\mathbf{z} \rVert_{2}$.
	\label{theo:localSmo}
\end{theorem}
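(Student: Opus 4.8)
The plan is to follow the standard Wirtinger-flow local-smoothness argument (as in \cite{candWir}), adapted to the smoothed loss $g(\mathbf{z},\mu)$. First I would fix $\theta = \theta(z)$ and write $\mathbf{h} = \mathbf{x} - e^{-j\theta}\mathbf{z}$, and rewrite the gradient \eqref{eq:nontrunc2} in terms of $\mathbf{h}$ by substituting $\mathbf{z} = e^{j\theta}(\mathbf{x}-\mathbf{h})$, so that
\begin{align}
\partial g(\mathbf{z},\mu) = \frac{2}{m}\sum_{i=1}^{m}\left(1 - \frac{\lvert\mathbf{a}_{i}^{H}\mathbf{x}\rvert}{\sqrt{\lvert\mathbf{a}_{i}^{H}\mathbf{z}\rvert^{2}+\mu^{2}}}\right)\mathbf{a}_{i}\mathbf{a}_{i}^{H}\mathbf{z}.
\end{align}
Then I would split the summand into two pieces: the ``noiseless'' term $\tfrac{2}{m}\sum_i \mathbf{a}_i\mathbf{a}_i^{H}\mathbf{z}$-type contribution that naturally controls $d_r(\mathbf{z},\mathbf{x})$ via concentration of $\tfrac1m\sum_i \mathbf{a}_i\mathbf{a}_i^{H}$ around $\mathbf{I}_n$, and a residual term carrying the factor $\bigl(1-\lvert\mathbf{a}_i^H\mathbf{x}\rvert/\sqrt{\lvert\mathbf{a}_i^H\mathbf{z}\rvert^2+\mu^2}\bigr)$. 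The triangle inequality on \eqref{eq:distance} gives $\lvert\mathbf{a}_i^H\mathbf{x}\rvert \le \lvert\mathbf{a}_i^H\mathbf{z}\rvert + \lvert\mathbf{a}_i^H\mathbf{h}\rvert$, and since $\lvert\mathbf{a}_i^H\mathbf{z}\rvert \le \sqrt{\lvert\mathbf{a}_i^H\mathbf{z}\rvert^2+\mu^2}$, the residual factor is bounded in absolute value by $\lvert\mathbf{a}_i^H\mathbf{h}\rvert/\sqrt{\lvert\mathbf{a}_i^H\mathbf{z}\rvert^2+\mu^2}$; multiplying by $\lvert\mathbf{a}_i^H\mathbf{z}\rvert \le \sqrt{\lvert\mathbf{a}_i^H\mathbf{z}\rvert^2+\mu^2}$ from the $\mathbf{a}_i\mathbf{a}_i^H\mathbf{z}$ factor, the residual contribution to $\lVert\partial g(\mathbf{z},\mu)\rVert_2$ is at most $\tfrac{\rho}{m}\sum_{k=1}^m \lvert\mathbf{a}_k^H\mathbf{h}\rvert$ (after bounding $\lVert\mathbf{a}_i\rVert_2$ uniformly on the high-probability event $1-me^{-n/2}$, which is exactly where that probability enters).

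For the first term, I would invoke the matrix concentration inequality for sums of i.i.d.\ Gaussian outer products: on the event $m\ge C(\epsilon_0)n$, $\bigl\lVert \tfrac1m\sum_i \mathbf{a}_i\mathbf{a}_i^H - \mathbf{I}_n\bigr\rVert \le \epsilon_0$, so $\bigl\lVert \tfrac{2}{m}\sum_i \mathbf{a}_i\mathbf{a}_i^H \mathbf{z}\bigr\rVert_2$ and the analogous quantity applied to $\mathbf{h}$ are controlled by $(1+\epsilon_0)$ times norms; combined with $d_r(\mathbf{z},\mathbf{x}) = \lVert\mathbf{h}\rVert_2$ this yields the $\beta\, d_r(\mathbf{z},\mathbf{x})$ term with $\beta$ absorbing the $2(1+\epsilon_0)$ factor. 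Putting the two bounds together gives $\lVert\partial g(\mathbf{z},\mu)\rVert_2 \le \beta\, d_r(\mathbf{z},\mathbf{x}) + \tfrac{\rho}{m}\sum_{k=1}^m \lvert\mathbf{a}_k^H\mathbf{h}\rvert$ as claimed. The $\mu$-dependence is harmless throughout: every appearance of $\mu$ only \emph{helps}, since $\sqrt{\lvert\mathbf{a}_i^H\mathbf{z}\rvert^2+\mu^2} \ge \lvert\mathbf{a}_i^H\mathbf{z}\rvert$, so the bound is in fact uniform in $\mu\in\mathbb{R}_+$ and reduces to the SPARTA/TAF case at $\mu=0$.

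The main obstacle I anticipate is not the algebra but handling the term $\lvert\mathbf{a}_i^H\mathbf{z}\rvert/\sqrt{\lvert\mathbf{a}_i^H\mathbf{z}\rvert^2+\mu^2}$ cleanly when $\lvert\mathbf{a}_i^H\mathbf{z}\rvert$ is small relative to $\mu$ — there the ratio degrades and one must be careful that the $\mathbf{a}_i\mathbf{a}_i^H\mathbf{z}$ factor (which is also small, of order $\lvert\mathbf{a}_i^H\mathbf{z}\rvert$) compensates, so the product stays bounded by $\lvert\mathbf{a}_i^H\mathbf{h}\rvert$; this is exactly the cancellation that removes the need for truncation. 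A secondary technical point is that \eqref{eq:distance} is a minimized distance, so one must verify the inequalities are stated for the optimal $\theta(z)$ and that $\lVert\mathbf{h}\rVert_2 = d_r(\mathbf{z},\mathbf{x})$, which is immediate from the definition but worth spelling out. The term $\tfrac{\rho}{m}\sum_k \lvert\mathbf{a}_k^H\mathbf{h}\rvert$ is left un-simplified on purpose — in the companion contraction argument (Theorem \ref{theo:contrac}) it is further bounded using sparsity of $\mathbf{h}$ and restricted-isometry-type estimates, but for the local smoothness statement itself the raw sum suffices.
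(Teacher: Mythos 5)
Your overall route is the same as the paper's: rewrite the gradient modulo the optimal phase $e^{-j\theta(z)}$, split off a piece that is linear in $\mathbf{h}$ and control it via the spectral concentration $\lVert\tfrac1m\sum_i\mathbf{a}_i\mathbf{a}_i^H\rVert_{2\rightarrow 2}\leq 1+\epsilon_0$ (valid once $m\geq C(\epsilon_0)n$), bound the remaining summands pointwise, and pull out $\max_i\lVert\mathbf{a}_i\rVert_2\leq\sqrt{2.3n}$, which is exactly where the probability $1-me^{-n/2}$ enters. One remark on the decomposition: the piece that controls $d_r(\mathbf{z},\mathbf{x})$ must be $\tfrac{2}{m}\sum_i\mathbf{a}_i\mathbf{a}_i^H\mathbf{h}$, not $\tfrac{2}{m}\sum_i\mathbf{a}_i\mathbf{a}_i^H\mathbf{z}$ as you wrote (the latter has norm of order $\lVert\mathbf{z}\rVert_2$); the paper obtains it by adding and subtracting $q_i\,\mathbf{a}_i^H\mathbf{x}/\lvert\mathbf{a}_i^H\mathbf{x}\rvert$ inside the parenthesis of \eqref{eq:gradient}, and your write-up is ambiguous about which factor goes into which piece.

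The genuine gap is your treatment of $\mu$. The residual factor equals $\bigl(\sqrt{\lvert\mathbf{a}_i^H\mathbf{z}\rvert^2+\mu^2}-\lvert\mathbf{a}_i^H\mathbf{x}\rvert\bigr)/\sqrt{\lvert\mathbf{a}_i^H\mathbf{z}\rvert^2+\mu^2}$. Your triangle inequality $\lvert\mathbf{a}_i^H\mathbf{x}\rvert\leq\lvert\mathbf{a}_i^H\mathbf{z}\rvert+\lvert\mathbf{a}_i^H\mathbf{h}\rvert$ only bounds this from below by $-\lvert\mathbf{a}_i^H\mathbf{h}\rvert/\sqrt{\cdots}$; on the other side the best available estimate is $\sqrt{\lvert\mathbf{a}_i^H\mathbf{z}\rvert^2+\mu^2}\leq\lvert\mathbf{a}_i^H\mathbf{z}\rvert+\mu$, so the numerator picks up an unavoidable additive $\mu$ and the absolute value of the factor is bounded by $(\lvert\mathbf{a}_i^H\mathbf{h}\rvert+\mu)/\sqrt{\cdots}$, not by $\lvert\mathbf{a}_i^H\mathbf{h}\rvert/\sqrt{\cdots}$. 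Hence the claim that ``every appearance of $\mu$ only helps'' is false: after summing, an extra $O(\sqrt{n}\,\mu)$ term survives --- precisely the $2\sqrt{2.3n}\,\mu$ that the paper carries explicitly into \eqref{eq:smooth3}. A concrete check: at $\mathbf{z}=\mathbf{x}$ one has $\mathbf{h}=0$ and $d_r(\mathbf{z},\mathbf{x})=0$, so your bound forces $\partial g(\mathbf{x},\mu)=0$, yet \eqref{eq:nontrunc2} gives $\partial g(\mathbf{x},\mu)=\tfrac{2}{m}\sum_i\bigl(1-\lvert\mathbf{a}_i^H\mathbf{x}\rvert/\sqrt{\lvert\mathbf{a}_i^H\mathbf{x}\rvert^2+\mu^2}\bigr)\mathbf{a}_i\mathbf{a}_i^H\mathbf{x}$, which is nonzero for $\mu>0$. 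The paper eliminates the $\mu$ term only in a separate final step (arguing that \eqref{eq:smooth3} holds for every $\mu\in\mathbb{R}_{++}$ and invoking a limit theorem); whatever one thinks of that step, it is where the whole difficulty of the statement resides, and your proposal skips it with an incorrect justification.
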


\begin{proof}
	The proof of the Theorem can be found in Appendix E.
\end{proof}

Finally, considering the result in Theorem \ref{theo:localSmo} we have that the local smoothness property it is preserved for the whole Wirtinger derivative $\partial g(\mathbf{z},\mu)$. In contrast, for those methods such as SPARTA that truncates the update direction, the local smoothness property it is preserved just for a piece of the direction update, introducing an important deviation of their search directions  \cite{wang2016sparse}, which reduces its performance to solve the phase retrieval problem as illustrated in Section \ref{sec:result}.

\section{Simulations and Results}
\label{sec:result}
In this section, the evaluation of the performance of the proposed method relative to SparseAltMinPhase (SAMP) \cite{netrapalli2013phase}, sparse WF (SWF) \cite{yuan2017phase} and Sparse Truncated Amplitude flow (SPARTA) \cite{wang2016sparse} is presented. All parameters for the implementation of each algorithm are their own suggested values in \cite{netrapalli2013phase,yuan2017phase,wang2016sparse}, respectively. The performance metric used is the $\textup{ relative error} := d_{r}(\mathbf{z,x})/\lVert \mathbf{x}\rVert_{2}; $ where $d_{r}(\mathbf{z,x}):= \min_{\theta \in [0,2\pi)} \lVert\mathbf{z}e^{-j\theta} - \mathbf{x}\rVert_{2}$ is the Euclidean distance modulo a global unimodular constant between two complex vectors, and $d_{r}(\mathbf{z,x}) = \min \lVert \mathbf{z \pm x}\rVert_{2}$ for the real case. We also evaluate the performance with the empirical success rate among 100 trial runs. For each trial, $1000$ iterations for all algorithms are employed. We declare that a trial is successful when the returned estimate incurs a relative error less than $10^{-5}$. All simulations are implemented in Matlab 2017a on an Intel Core i7 3.41Ghz CPU and 32 GB of RAM.

Six different tests are performed: the first assumes that the sparsity $k$ is known, the second and third consider that the sparsity is unknown, the fourth determines how the sparsity affects the ability of the methods to solve the sparse PR problems, the fifth considers the presence of noise, and finally the sixth scenario evaluates the reconstruction of a synthetic sparse signal. 

For all the experiments, the real signal is a Gaussian random vector generated as $\mathbf{x} \thicksim \mathcal{N}(0,\mathbf{I}_{1000})$ and the sampling vectors $\mathbf{a}_{i} \thicksim \mathcal{N}(0,\mathbf{I}_{1000})$ for $i=1,...,m$. For the complex Gaussian case $\mathbf{x} \thicksim \mathcal{N}(0,\mathbf{I}_{1000})+ j\mathcal{N}(0,\mathbf{I}_{1000})$ and the sampling vectors $\mathbf{a}_{i} \thicksim \mathcal{N}(0,\frac{1}{2}\mathbf{I}_{1000})+j\mathcal{N}(0,\frac{1}{2}\mathbf{I}_{1000})$ for $i=1,...,m$. The default values of the parameters of Algorithm \ref{alg:smothing} were determined using a cross-validation strategy. They were fixed as $\tau=0.3$ and the variables $\gamma=0.9$, $\gamma_1=0.5$, $\mu_{(0)}=30$ and $T=1000$.

\subsection*{Test 1: Known Sparsity}
The first experiment analyzes the sampling complexity under a noiseless real and complex Gaussian model, assuming that the sparsity $k$ is known. Figure \ref{fig:test1} summarizes the attained empirical success rate in terms of the number of measurements, for all algorithms under analysis. For this test, the sparsity of the signal $\mathbf{x}$ is fixed as $k=10$, and the ratio between $m$ and $n$ ($i.e$ $m/n$) is varied from 0.1 to 3, with a step size of 0.1, for both the real and the complex cases. At each ratio $m/n$, we calculate the average over 100 tests. 

\begin{figure}[ht]
	\centering
	\includegraphics[width=0.9\textwidth]{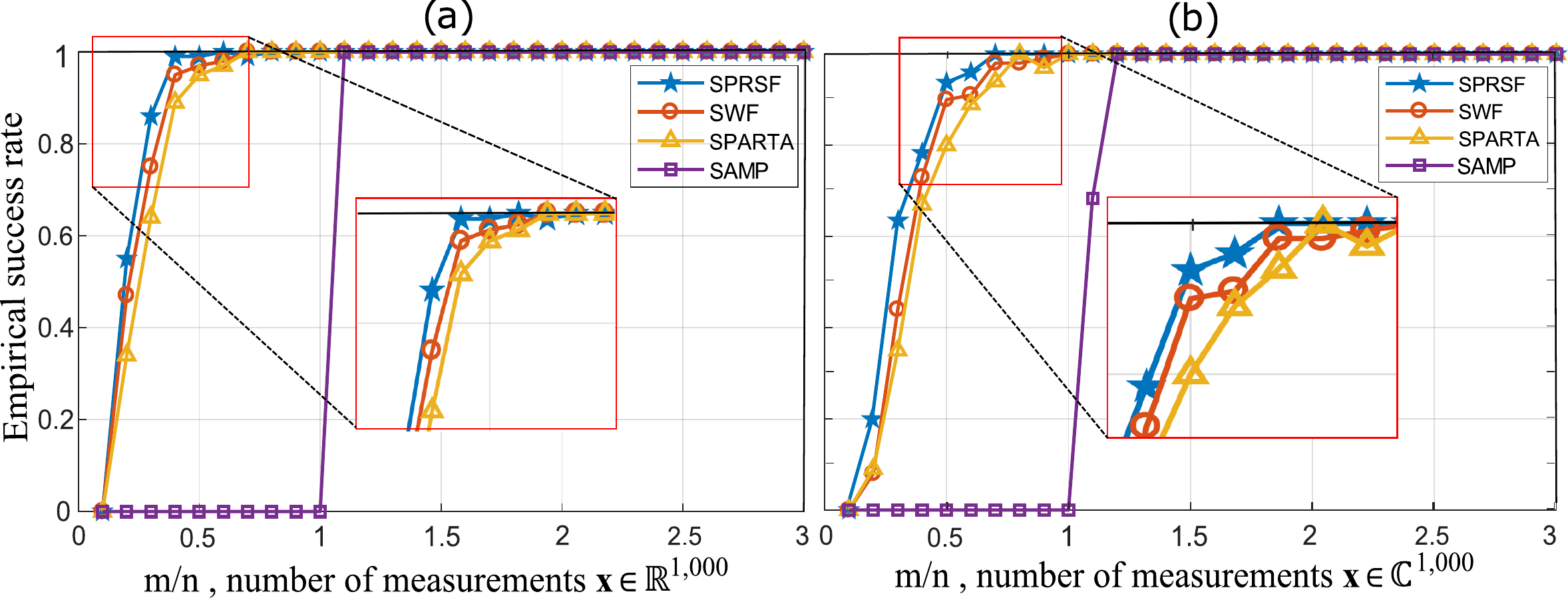}
	\caption{\footnotesize {Empirical success rate versus number of measurements for $n = 1000$, known sparsity $k=10$ and $m/n$ with a step size of 0.1 from 0.1 to 3. (a) Noiseless real-valued Gaussian model for $\mathbf{x} \thicksim \mathcal{N}(0,\mathbf{I}_{1000})$ and $\mathbf{a}_{i} \thicksim \mathcal{N}(0,\mathbf{I}_{1000})$. (b) Noiseless complex-valued Gaussian model, with $\mathbf{x} \thicksim \mathcal{N}(0,\mathbf{I}_{n})+ j\mathcal{N}(0,\mathbf{I}_{n})$ and $\mathbf{a}_{i} \thicksim \mathcal{N}(0,\frac{1}{2}\mathbf{I}_{n})+j\mathcal{N}(0,\frac{1}{2}\mathbf{I}_{n})$. }}
	\label{fig:test1}
\end{figure}

The simulations in Fig. \ref{fig:test1} suggest that the proposed algorithm SPRSF requires less number of measurements to solve the sparse phase retrieval problem in comparison with the SWF, SPARTA and SAMP methods, for both the real and the complex cases. Moreover, notice that SPRSF achieves a success rate over $98\%$ when $m/n = 0.5$ for the real case and a success rate over 95\% when $m/n=0.6$ for the complex case. Further, SPRSF guarantees a perfect recovery from about $0.6n$ and $0.7n$ measurements for the real and complex cases, respectively. Therefore, these results show the effectiveness of the smoothing approximation scheme to solve the sparse phase retrieval problem.

\subsection*{Test 2: Unknown Sparsity Boundary}
In this experiment, we compare the ability of the methods to recover the signal $\mathbf{x}$ in terms of the sampling complexity, when the sparsity $k$ is unknown. Specifically, from Theorem \ref{theo:contrac}, it can be obtained that the sampling complexity of the SPRSF method is $\mathcal{O}(k^{2}\log(n))$. Now, suppose that there is no knowledge about the sparsity $k$. If we assume that the sparsity is $k=\sqrt{n}$, the sampling complexity is given by $\mathcal{O}(n\log(n))$, which is considered the limit value of the unknown $k$ when $k\ll n$ \cite{yuan2017phase}. Therefore, in this Test the sparsity of the signal $\mathbf{x}$ is fixed to $k=10$, but the experiments, in Fig. \ref{fig:test2}, assume the sparsity of the signal $\mathbf{x}$ is $\sqrt{n} \approx 32$, since $n=1000$.\vspace{-1em}
\begin{figure}[ht]
	\centering
	\includegraphics[width=0.9\textwidth]{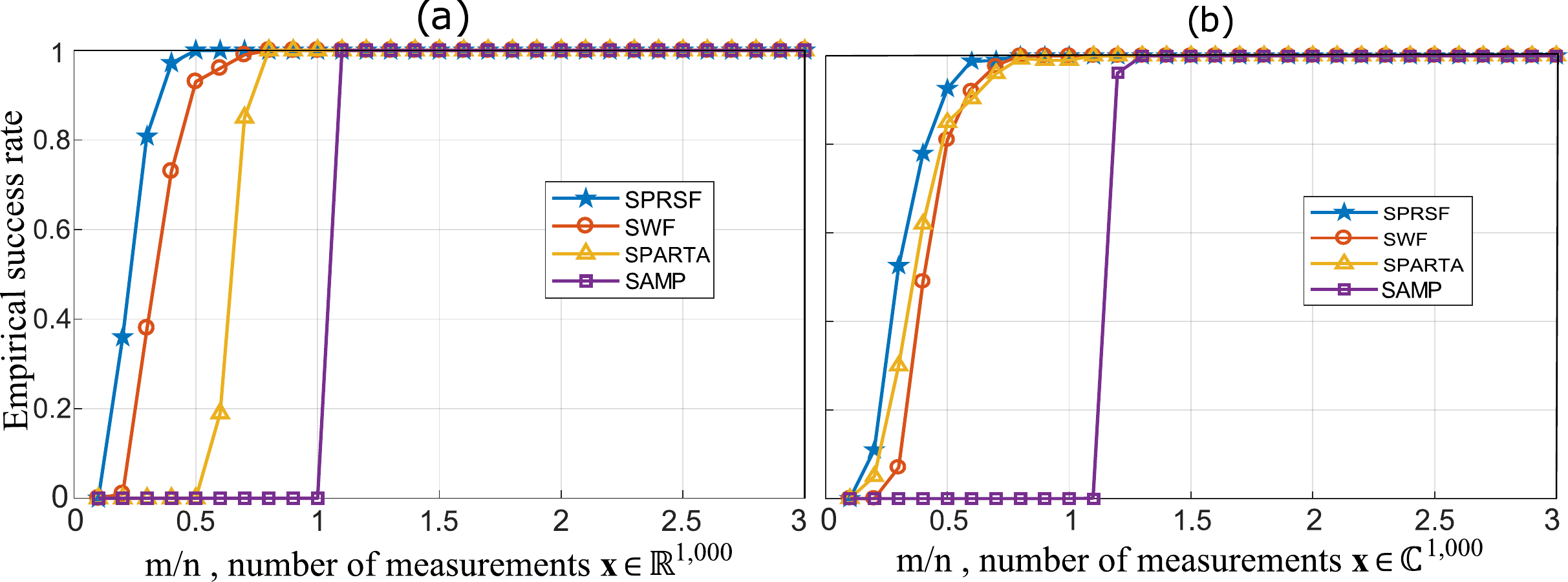}
	\caption{\footnotesize{Empirical success rate versus number of measurements for $n = 1,000$, $m/n$ with a step size of 0.1 from 0 to 3. The sparsity is assumed to be $k = \sqrt{n}\approx 32$ while the real sparsity is $k=10$. (a) Noiseless real-valued Gaussian model for $\mathbf{x} \thicksim \mathcal{N}(0,\mathbf{I}_{1000})$ and $\mathbf{a}_{i} \thicksim \mathcal{N}(0,\mathbf{I}_{1000})$. (b) Noiseless complex-valued Gaussian model, with $\mathbf{x} \thicksim \mathcal{N}(0,\mathbf{I}_{n})+ j\mathcal{N}(0,\mathbf{I}_{n})$ and $\mathbf{a}_{i} \thicksim \mathcal{N}(0,\frac{1}{2}\mathbf{I}_{n})+j\mathcal{N}(0,\frac{1}{2}\mathbf{I}_{n})$. }}
	\label{fig:test2}
\end{figure}

Notice that, SPRSF outperforms the other algorithms when the priori sparsity $k$ is not known correctly for both real and complex cases. Further, it can be observed that compared with Test 1 in Fig. \ref{fig:test1}, the superiority of the proposed method SPRSF with respect to SPARTA, SWF and SAMP, is more evident. Figure \ref{fig:test2} also shows that SPRSF attains a success rate of 80\% when $m/n = 0.3$ for the real case and a success rate of 90\% when $m/n=0.5$ for the complex case. Perfect recovery is attained from about $0.6n$ and $0.7n$ measurements for the real and the complex cases, respectively. 

It can be concluded that this second test suggests that the proposed smoothing approximation scheme overcomes its competitive alternatives when the sparsity is assumed different to its real value.

\subsection*{Test 3: Unknown Sparsity}

In this experiment, numerical simulations are conducted to analyze the ability of the methods to solve the sparse phase retrieval problem when the sparsity $k$ is completely unknown. For these simulations, the sparsity of the signal $\mathbf{x}$ was fixed as $k=10$ and since the sparsity is unknown, we range $\hat{k}$ from 35 to 180 for real and complex cases, with a step size of 5. At each $\hat{k}$, we calculate the average of the empirical success rate over 100 tests. We called the sparsity $\hat{k}$, the priori sparsity. The number of measurements $m$ was fixed to $m=n$. All these numerical tests are summarized in Fig. \ref{fig:test3}. We omitted the SAMP simulations in Fig. \ref{fig:test3}, since from Fig. \ref{fig:test1} it can be noticed that SAMP cannot solve the sparse PR problem when the sparsity $k$ is known and the number of measurements $m=n$.
\begin{figure}[ht]
	\centering
	\includegraphics[width=0.9\textwidth]{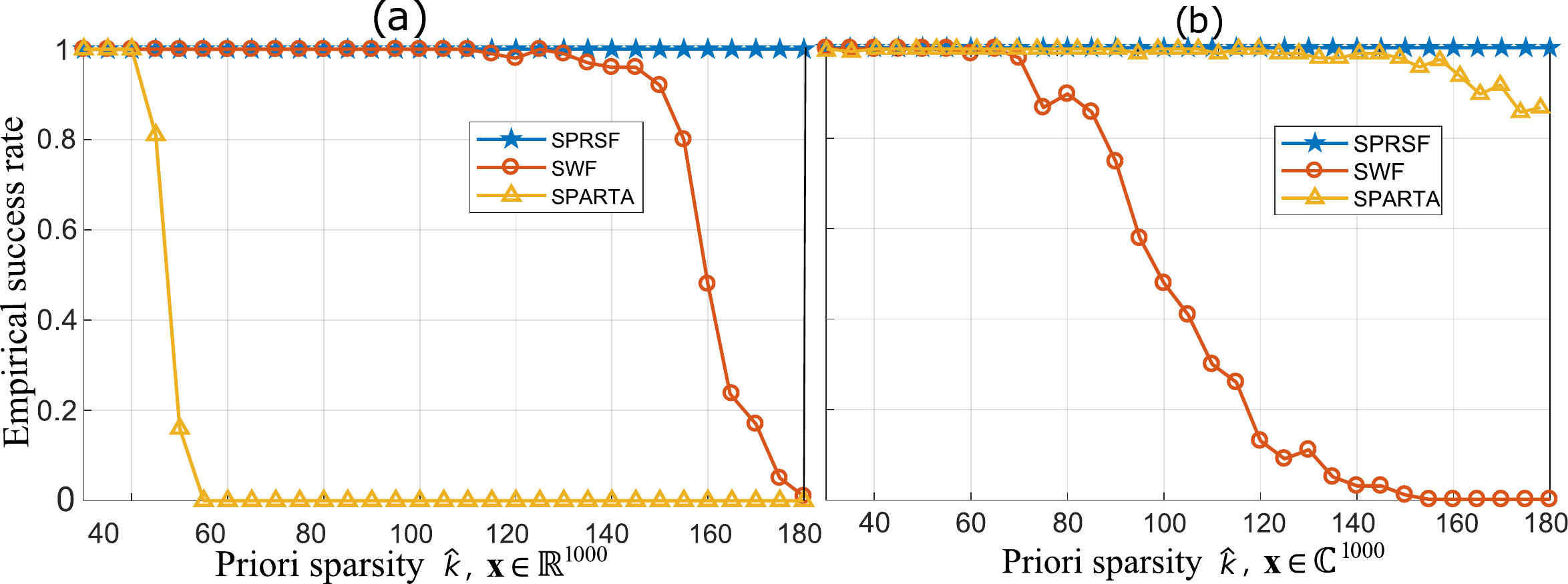}
	\caption{\footnotesize{Empirical success rate versus number of measurements for n = $1000$, $m/n = 1$, where the real sparsity is $k=10$. The priory sparsity $\hat{k}$ was ranged from 35 to 180, with a step size of 5. (a) Noiseless real-valued Gaussian model with $\mathbf{x} \thicksim \mathcal{N}(0,\mathbf{I}_{1000})$ and $\mathbf{a}_{i} \thicksim \mathcal{N}(0,\mathbf{I}_{1000})$. (b) Noiseless complex-valued Gaussian model with $\mathbf{x} \thicksim \mathcal{N}(0,\mathbf{I}_{n})+ j\mathcal{N}(0,\mathbf{I}_{n})$ and $\mathbf{a}_{i} \thicksim \mathcal{N}(0,\frac{1}{2}\mathbf{I}_{n})+j\mathcal{N}(0,\frac{1}{2}\mathbf{I}_{n})$. }}
	\label{fig:test3}
\end{figure}\vspace{-1em}

From Fig. \ref{fig:test3} it can be observed that the proposed method SPRSF overcomes its competing alternatives because it guarantees perfect recovery when the sparsity $k$ of the signal $\mathbf{x}$ is completely unknown. Further, notice that SPARTA cannot recover the signal without prior knowledge about the sparsity from a priori sparsity $\hat{k}=55$ and $\hat{k}=140$ for the real and complex cases, respectively, when the sparsity is $k=10$. Also, it can be concluded that SWF is superior to SPARTA for the real case, but SWF cannot recover the sparse signal from a priori sparsity $\hat{k}\geq 155$. However, for the complex case SPARTA exhibits a better performance than SWF, because SPARTA cannot always recover the signal from a priority sparsity $\hat{k}\geq 150$.

In summary, by combining the results from Test 2 (Fig. \ref{fig:test2})  and Test 3 (Fig. \ref{fig:test3}), it can be concluded that SPRSF is highly superior to SPARTA, SAMP and SWF in recovering the sparse signal $\mathbf{x}$ when there is no prior knowledge about the sparsity $k$.\vspace{-1em}

\subsection*{Test 4: Different Values of Sparsity Analysis}
This section shows numerical simulations to determine the effect of different sparsity values on the performance of SAMP, SPARTA, SWF and SPRSF. For these experiments we fixed the number of measurements $m=1.5n$ with $n=1000$ and the sparsity of the signal varying from $10$ to $100$ with a step size of 5. In these cases, we assume that the sparsity $k$ is known. All the numerical results are summarized in Fig. \ref{fig:test4}.\vspace{-1em}
\begin{figure}[H]
	\centering
	\includegraphics[width=0.9\textwidth]{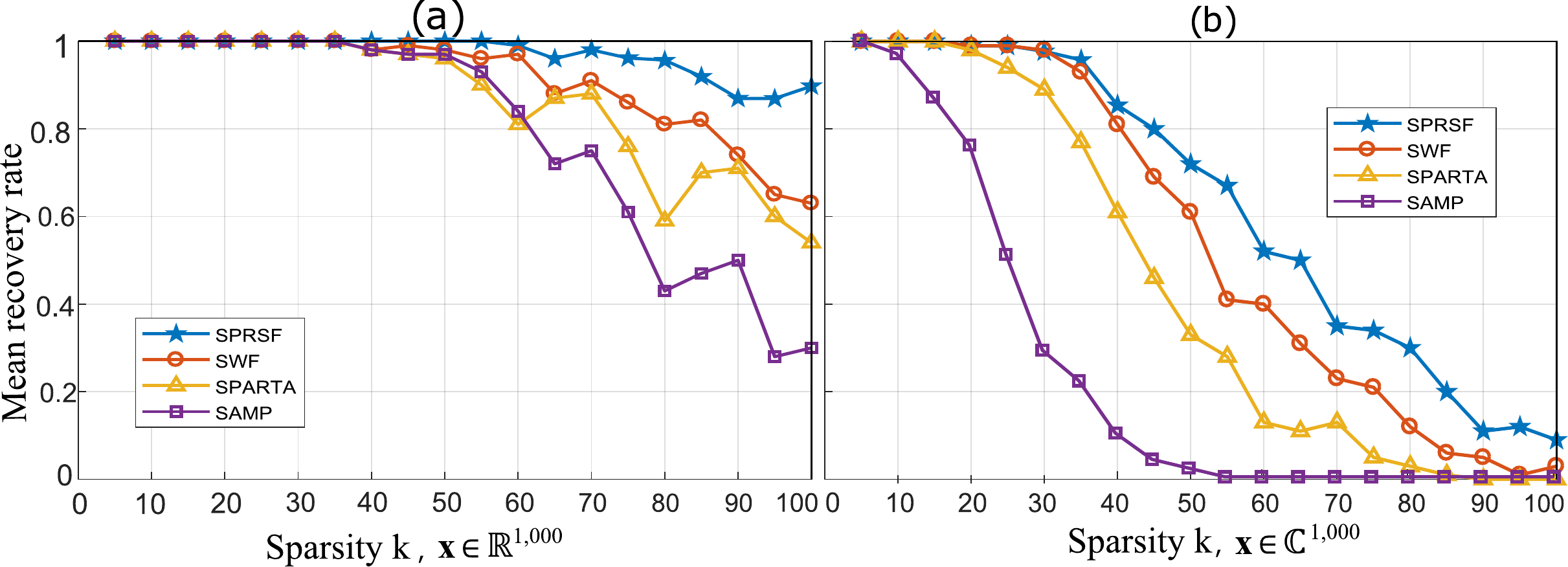}
	\caption{\footnotesize{Empirical success rate versus sparsity $k$ ranged from 10 to 100 with a step size of 5, $n = 1000$, $m/n = 1.5$. (a) Noiseless real-valued Gaussian model for $\mathbf{x} \thicksim \mathcal{N}(0,\mathbf{I}_{1000})$ and $\mathbf{a}_{i} \thicksim \mathcal{N}(0,\mathbf{I}_{1000})$. (b) Noiseless complex-valued Gaussian model, with $\mathbf{x} \thicksim \mathcal{N}(0,\mathbf{I}_{n})+ j\mathcal{N}(0,\mathbf{I}_{n})$ and $\mathbf{a}_{i} \thicksim \mathcal{N}(0,\frac{1}{2}\mathbf{I}_{n})+j\mathcal{N}(0,\frac{1}{2}\mathbf{I}_{n})$.}}
	\label{fig:test4}
\end{figure}\vspace{-1em}

Figure \ref{fig:test4} shows that the SPRSF method is superior to the SAMP, SPARTA and SWF algorithms, for both real and complex cases, since SPRSF can solve the sparse phase retrieval problem for signals with larger sparsity values, as opposed to its competitive alternatives. Also, it can be concluded that SPRSF has a mean recovery rate of about 75\% and 12\% when the sparsity is $k=100$ for the real and complex cases, respectively.\vspace{-1em}

\subsection*{Test 5: Noise Corruption Analysis}
Numerical tests are conducted to demonstrate the robustness of SPRSF to noise corruption. These simulations are performed under the noisy real/complex valued Gaussian model $\hat{y}_{i} = \vert \mathbf{a}_{i}^H \mathbf{x} \vert + \eta_{i}$. The noisy data was generated as $q_{i} = \hat{y}_{i}$ wit a signal to noise ratio (SNR) ranging from 5dB to 70dB. The number of measurements was fixed as $m=1.5n $ and the sparsity as $k=10$. The results in Fig. \ref{fig:test5} are the average of the relative error metric $d_{r}(\mathbf{z,x})/\lVert \mathbf{x}\rVert_{2}$  of 100 tests for each SNR value.\vspace{-1em}
\begin{figure}[H]
	\centering
	\includegraphics[width=0.9\textwidth]{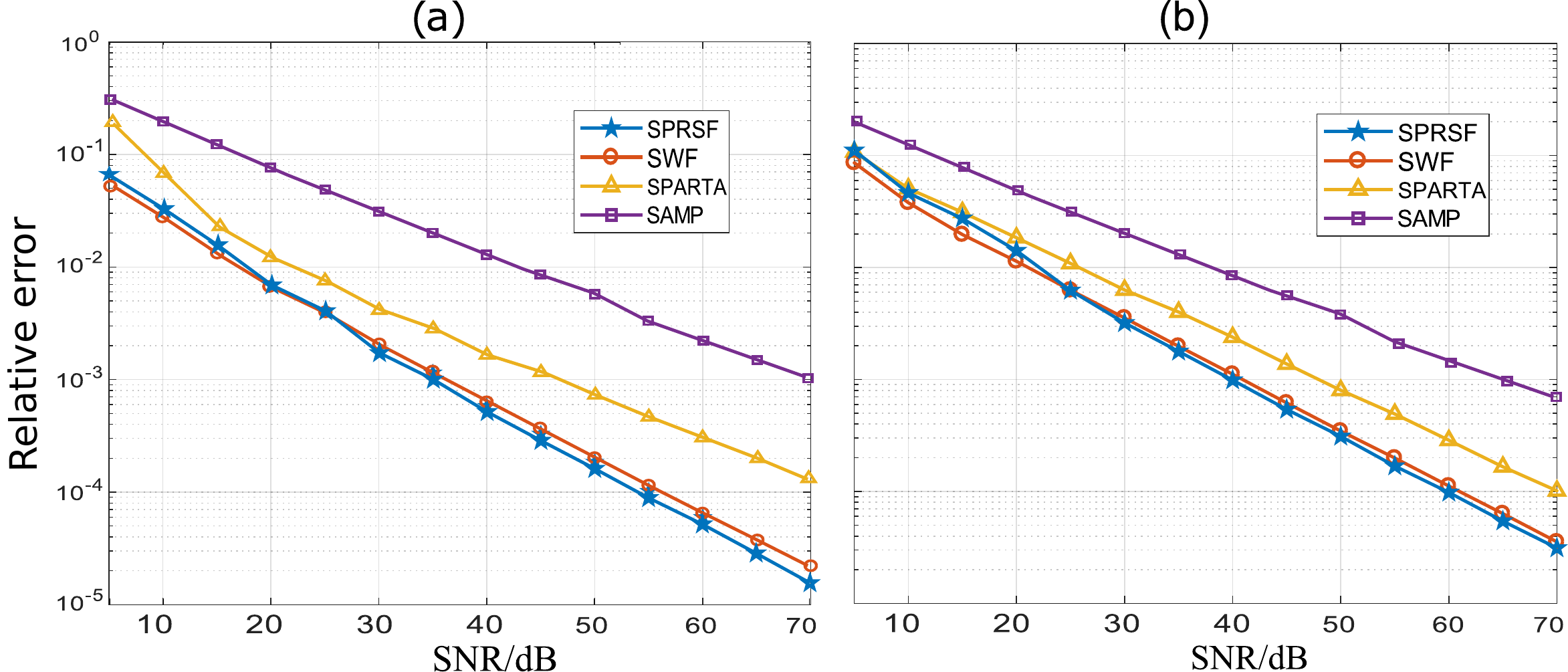}
	\caption{\footnotesize{Mean of 100 NMSE test for different values of  Gaussian white noise from 5dB to 70dB of SNR. (a) Noisy real-valued Gaussian model. (b) Noisy complex-valued Gaussian model.}}
	\label{fig:test5}
\end{figure}

From Fig. \ref{fig:test5} it can be observed that SWF attains a slightly better performance in solving the sparse phase retrieval problem, compared with SPRSF for the real and complex cases, in high-noise scenarios $0< SNR \leq 20$. However, when the noise level decreases, the proposed method overcomes that of SWF for both cases. Further, for the real and complex cases, the results show that SPRSF exhibits a better performance compared with its competitive SPARTA and SAMP alternatives for all values of noise.

\subsection*{Test 6: Speed of Convergence}
Simulations are conducted to compare the speed of convergence in absence of noise, under the limit case $m=n$ for both real and complex cases. The sparsity of the signal was fixed as $k=10$ and the priori sparsity as $\hat{k}=32$. Table \ref{tab:time} reports the number of iterations and the time cost required by all the algorithms to achieve a relative error of $10^{-14}$, averaged over 100 successful trials. In Table \ref{tab:time}, the optimal value of each column is shown in bold and the second-best result is underlined.\vspace{-1.5em}
\begin{center}
	\begin{table}[H]
		\caption{Comparison of iteration count and time cost among algorithms}
		\centering
		\begin{tabular}{|c|c|c|c|c|c|}
			\hline
			\textbf{Algorithms}& \multicolumn{2}{|c|}{\textbf{Real Case}}& \multicolumn{2}{|c|}{\textbf{Complex Case}}\\
			\hline
			& Iterations & Time (s) & Iterations & Time (s) \\
			\hline
			SPRSF & \textbf{85} & \underline{0.1061} & \textbf{103} & \textbf{0.145} \\
			\hline
			SPARTA & \underline{125} & \textbf{0.093} & \underline{128} & \underline{0.3945} \\
			\hline
			SWF & 243  & 5.1823 & 728 & 14.881 \\
			\hline
		\end{tabular}
		\label{tab:time}
	\end{table}
	\vspace{-2.5em}
\end{center}

From Table \ref{tab:time} it can be observed that SPRSF is the second best algorithm in terms of computational complexity in the real case, over all methods under analysis. However, for the complex case, SPRSF is the fastest to converge to the solution compared with SPARTA and SWF. On the other hand, we omitted the SAMP simulations in Table \ref{tab:time}, since from Fig. \ref{fig:test1} it can be noticed that SAMP cannot solve the sparse PR problem when the sparsity $k$ is known and the number of measurements $m=n$.

\subsection*{Test 7: Reconstructions}
Finally, to test the performance of the proposed algorithm on synthetic data, a random sparse signal $\mathbf{x} \in \mathbb{R}^{1000}/\mathbb{C}^{1000}$ is employed as illustrated in Fig. \ref{fig:res1}(a). The sparsity of the signal is $k=10$ and the number of measurements is fixed as $m=n$. The sampling vectors were generated as $\mathbf{a}_{i} \thicksim \mathcal{N}(0,\mathbf{I}_{1000})$ for $i=1,...,m$. The different analyzed algorithms were used to reconstruct the signal assuming a priori sparsity with value $\hat{k}=180$. The obtained reconstructions are shown from Fig. \ref{fig:res1}(b) to Fig. \ref{fig:res1}(d).
\begin{figure}[ht]
	\centering
	\includegraphics[width=0.8\textwidth]{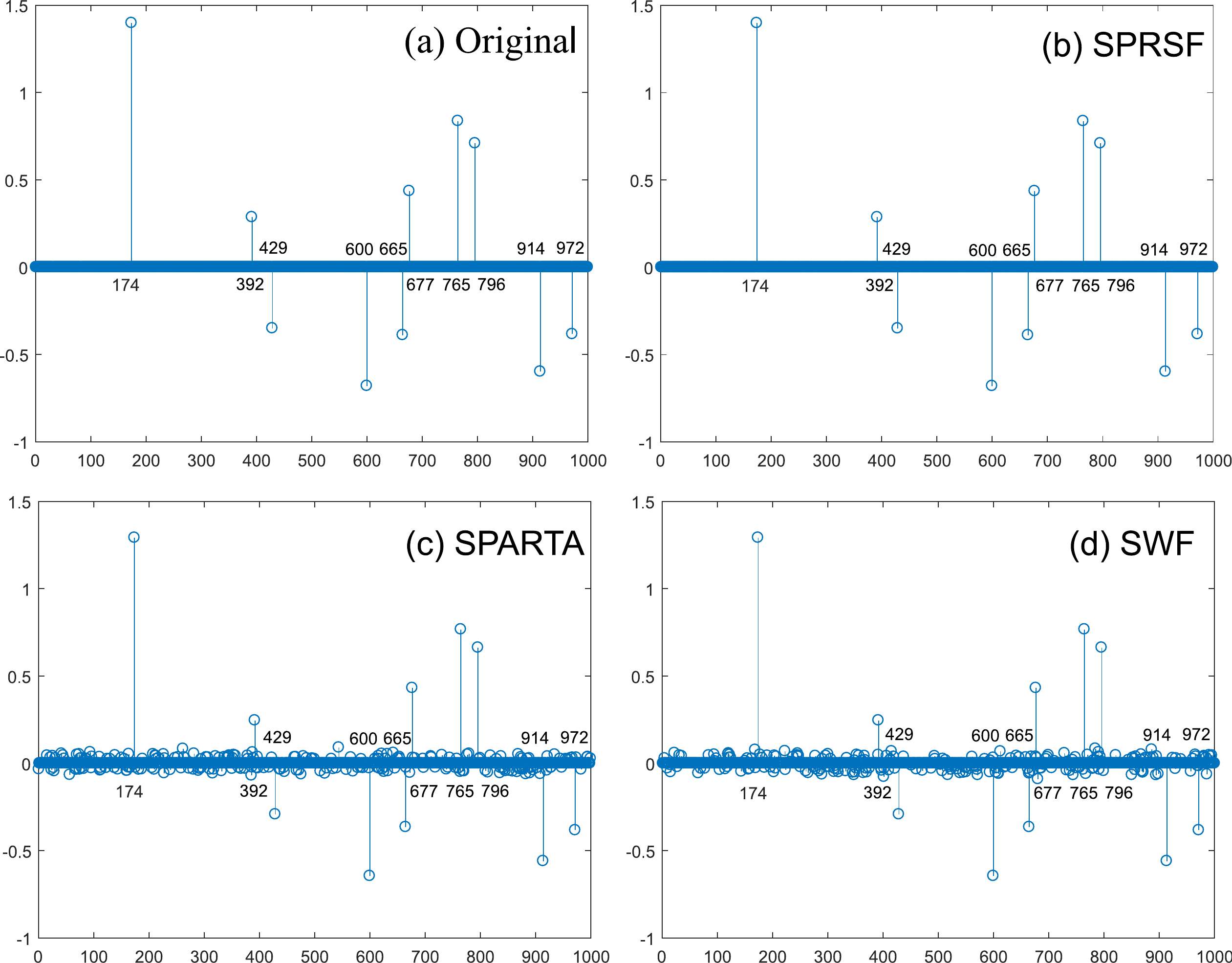}
	\caption{\footnotesize{(a) Original signal with sparsity $k=10$. Reconstructed signal with a priority sparsity $\hat{k}=180$ by (b) SPRSF, (c) SPARTA, and (d) SWF.} }
	\label{fig:res1}
\end{figure}

From Fig. \ref{fig:res1}(b) it can be observed that the proposed method SPRSF can recover perfectly the signal. In contrast, notice that the SPARTA and SWF methods can determine the real support of the original signal, but they also reconstructed nonzero values in positions in which the original signal has zero elements. Then, these numerical results suggest that the proposed method is highly superior to its competitive alternatives to solve the sparse phase retrieval problem when the sparsity is unknown.

\section{Conclusion}
This paper presented the Sparse Phase Retrieval Algorithm via Smoothing Function (SPRSF) to solve the sparse phase retrieval problem. SPRSF is an iterative algorithm where the update step is obtained by a hard thresholding over a gradient descent direction. Also, numerical experiments show an improvement of the SPRSF method in terms of sampling complexity, since it requires less number of measurements when compared to state of art methods such as SAMP, SPARTA and SWF. Moreover, the results also show the ability of the proposed method to recover the signal of interest even when the sparsity is unknown. Furthermore, the SPRSF algorithm guarantees perfect recovery when there is not prior knowledge of the sparsity. Moreover, SPRSF attains a higher mean recovery rate in comparison with the analyzed state of art methods, when the sparsity varies for the real and complex cases. Additionally, the SPRSF method attains a higher performance under a noisy model with respect to SAMP, SPARTA and SWF, even in highly noisy scenarios.


\section{Appendix A: Proof Lemma  \ref{lem:locallyLipschi}}
\begin{proof}
	To prove the lemma, we proceed to show first that for all $i\in\{1,\cdots,m\}$ the functions $f_{i}(\cdot)$ in \eqref{eq:traditional} are Lipschitz continuous. Let $\mathbf{w}_{1},\mathbf{w}_{2}\in \mathbb{C}^{n}$ be two different vectors such that
	\begin{equation}
	\lvert f_{i}(\mathbf{w}_{1})-f_{i}(\mathbf{w}_{2})\rvert = \lvert \lvert \langle \mathbf{a}_{i},\mathbf{w}_{1} \rangle \rvert - \lvert \langle \mathbf{a}_{i},\mathbf{w}_{2} \rangle \rvert \rvert.
	\label{eq:defLips}
	\end{equation}
	By using the triangle inequality on the right hand side term of \eqref{eq:defLips}, one can write
	\begin{equation}
	\lvert \lvert \langle \mathbf{a}_{i},\mathbf{w}_{1} \rangle \rvert - \lvert \langle \mathbf{a}_{i},\mathbf{w}_{2} \rangle \rvert \rvert \leq \lvert  \langle e^{-j\theta}\mathbf{w}_{1},\mathbf{a}_{i} \rangle - \langle \mathbf{w}_{2},\mathbf{a}_{i} \rangle \rvert,
	\label{eq:lipsTrian}
	\end{equation}
	for any $\theta\in [0,2\pi)$. Using the fact that $\langle \mathbf{w},\mathbf{a}_{i} \rangle=\mathbf{a}_{i}^{H}\mathbf{w}$ and from \eqref{eq:defLips} and \eqref{eq:lipsTrian}, it can be expressed that
	\begin{equation}
	\begin{array}{ll}
	\lvert f_{i}(\mathbf{w}_{1})-f_{i}(\mathbf{w}_{2})\rvert &\leq \lvert e^{-j\theta}\left(\mathbf{a}_{i}^{H}\mathbf{w}_{1}\right) - \left(\mathbf{a}_{i}^{H}\mathbf{w}_{2}\right) \rvert\\
	&\leq \lvert \mathbf{a}_{i}^{H}\left(e^{-j\theta}\mathbf{w}_{1}-\mathbf{w}_{2}\right) \rvert.
	\end{array}
	\label{eq:lipsTrian1}
	\end{equation}
	By definition $\mathbf{a}_{i}^{H}\mathbf{w}=\sum_{l=1}^{n} \left(\overline{\mathbf{a}}_{i}\right)_{l}\left(\mathbf{w}\right)_{i}$, where $\left(\overline{\mathbf{a}}_{i}\right)_{l}$ is the $l$-th conjugate component of $\mathbf{a}_{i}$ and, $\left(\mathbf{w}\right)_{i}$ is the $i$-th element of $\mathbf{w}$. Then, using the triangle inequality, \eqref{eq:lipsTrian1} can be rewritten as
	\begin{equation}
	\begin{array}{ll}
	\lvert f_{i}(\mathbf{w}_{1})- f_{i}(\mathbf{w}_{2})\rvert &\leq \lvert \sum_{l=1}^{n} \left(\overline{\mathbf{a}}_{i}\right)_{l}\left(e^{-j\theta}\mathbf{w}_{1}-\mathbf{w}_2\right)_{l} \rvert\\
	&\leq \sum_{l=1}^{n} \lvert \left(\mathbf{a}_{i}\right)_{l}\rvert \lvert \left(e^{-j\theta}\mathbf{w}_{1}- \mathbf{w}_2\right)_{l}\rvert\\
	&\leq a^{i}_{*}\sum_{l=1}^{n} \lvert \left(e^{-j\theta}\mathbf{w}_{1}- \mathbf{w}_2\right)_{l}\rvert\\
	&\leq a^{i}_{*}\lVert e^{-j\theta}\mathbf{w}_{1}- \mathbf{w}_2\rVert_{1},
	\end{array}
	\label{eq:lipsL1}
	\end{equation}
	where $a^{i}_{*}=\max\{\lvert \left(\mathbf{a}_{i}\right)_{l}\rvert: l=1,\cdots,n\}$ and $\lVert \cdot \rVert_{1}$ is the $\ell_{1}$ norm. Since $\ell_{1}$ and $\ell_{2}$ are equivalent norms, there exist a constant $\rho\in \mathbb{R}_{++}$ such that $\lVert \mathbf{w} \rVert_{1} \leq \rho \lVert \mathbf{w} \rVert_{2}$ for all $\mathbf{w}\in \mathbb{R}^{n}/\mathbb{C}^{n}$ \cite{candes2008introduction}. Thus, \eqref{eq:lipsL1} becomes
	\begin{equation}
	\begin{array}{ll}
	\lvert f_{i}(\mathbf{w}_{1})- f_{i}(\mathbf{w}_{2})\rvert &\leq a^{i}_{*}\lVert e^{-j\theta}\mathbf{w}_{1}- \mathbf{w}_2\rVert_{1}\\
	&\leq \left(a^{i}_{*}\rho\right)\lVert e^{-j\theta}\mathbf{w}_{1}- \mathbf{w}_2\rVert_{2}.
	\end{array}
	\label{eq:lipsFinal}
	\end{equation}
	Notice that, for the $i.i.d.$ Gaussian vectors $\mathbf{a}_{k}$, $a_{*}^{k}=\lVert \mathbf{a}_{k}\rVert_{\infty}\leq \sqrt{2.3n}$ holds with probability at least $1-me^{-n/2}$ \cite{wang2016solving}. Further, taking the value of $\theta$ that minimizes the term $\lVert e^{-j\theta}\mathbf{w}_{1}-\mathbf{w}_{2} \rVert_{2}$, \eqref{eq:lipsFinal} can be written as
	\begin{equation}
	\lvert f_{k}(\mathbf{w}_{1})- f_{k}(\mathbf{w}_{2})\rvert \leq \left(\sqrt{2.3n}\rho\right) d_{r}(\mathbf{w}_{1},\mathbf{w}_2).
	\label{eq:lipsFinal1}
	\end{equation}
	Therefore, from \eqref{eq:lipsFinal1} it can be concluded that each $f_{k}(\cdot)$ is a Lipschitz continuous function with constant $L_{k}=\sqrt{2.3n}\rho$ with probability at least $1-me^{-n/2}$. Further, the function $\frac{1}{\sqrt{m}}\left(f_{k}(\mathbf{x})-q_{k}\right)$ in \eqref{eq:traditional} is also Lipschitz continuous with constant $\sqrt{\frac{2.3n}{m}}\rho$ with probability exceeding $1-me^{-n/2}$, because the term $q_{k}$ can be considered as a constant \cite{eriksson2013applied}.
	
	On the other hand, take any $\mathbf{w}\in \mathbb{C}^{n}$ and define $\mathcal{U}=\{\mathbf{z}\in \mathbb{C}^{n}: d_{r}(\mathbf{z},\mathbf{w})<\epsilon \}$ for $\epsilon>0$. Note that $\mathcal{U}$ is the neighborhood of $\mathbf{w}$ and also $\mathcal{U}$ is a bounded set because $\lVert \mathbf{z}\rVert_{2}\leq \lVert \mathbf{w}\rVert_{2}+\epsilon < \infty$, for all $\mathbf{z}\in \mathcal{U}$. Thus, given the fact that $\mathcal{U}$ is a bounded set and each function $\frac{1}{\sqrt{m}}\left(f_{k}(\mathbf{x})-q_{k}\right)$ is a Lipschitz continuous function, then $\frac{1}{m}\left(f_{k}(\mathbf{x})-q_{k}\right)^{2}$ restricted to the set $\mathcal{U}$ is a Lipschitz continuous function \cite{eriksson2013applied} with probability at least $1-me^{-n/2}$. Hence, since $f(\mathbf{x})$ defined in \eqref{eq:traditional} is a sum of Lipschitz continuous functions in the set $\mathcal{U}$, then $f(\mathbf{x})$ is a Lipschitz continuous function in $\mathcal{U}$. Thus, it can be concluded that $f(\mathbf{x})$ is locally Lipschitz continuous according to Definition \ref{def:lipschitz1} with probability at least $1-me^{-n/2}$.
	\end{proof}

\section*{Appendix B: Proof of Theorem \ref{theo:contrac}}
\label{app:proofTheorem1}
\begin{proof}
	Let $\mathbf{h}^{(t)} = \mathbf{x} - e^{-j\theta_{(t)}}\mathbf{z}^{(t)}$ with $\mathbf{z}^{(t)}$ and $\theta_{(t)} = \argmin_{\theta\in[0,2\pi) }\lVert \mathbf{x} -e^{-j\theta}\mathbf{z}^{(t)} \rVert_{2}$. Also, define
	\begin{align}
		\label{eq:definition1}
		\mathbf{d}^{(t)}&=\mathbf{z}^{(t)} - \tau\partial g(\mathbf{z}^{(t)},\mu_{(t)})\\ \nonumber
		&=\mathbf{z}^{(t)}-\frac{2\tau}{m}\sum_{i=1}^{m}\left(\sqrt{\lvert\mathbf{a}_{i}^{H}\mathbf{z}^{(t)}\rvert^{2}+\mu_{(t)}^{2}}-q_{i}\right)\frac{\mathbf{a}_{i}^{H}\mathbf{z}^{(t)}}{\sqrt{\lvert\mathbf{a}_{i}^{H}\mathbf{z}^{(t)}\rvert^{2}+\mu_{(t)}^{2}}}\mathbf{a}_{i},
	\end{align}
	for $t=0,1,\dots,\infty$, which stands for the prior estimate to the hard thresholding operation in Algorithm \ref{alg:smothing}, Line 4. Let $\Theta_{(t+1)}=S_{(t+1)}\cup S^{*}$ be a set where $S_{(t+1)}$ is the support of $\mathbf{z}^{(t+1)}$, and $S^{*}$ is the support of the real solution $\mathbf{x}$. The reconstruction error $\mathbf{h}^{(t+1)}$ is supported on the set $\Theta_{(t+1)} :=S^{*}\cup S_{(t+1)}$; likewise, $\mathbf{h}^{(t)}$ is supported on $\Theta_{(t)} :=S^{*}\cup S_{(t)}$. Moreover, the difference between $\Theta_{(t)}$ and $\Theta_{(t+1)}$ can be defined as $\Theta_{(t)}\setminus \Theta_{(t+1)}$, which consists of all elements of $\Theta_{(t)}$ that are not elements of $\Theta_{(t+1)}$. It is then clear that $|S^{*}| = |S_{(t)}|=k$, $|\Theta_{(t)}|\leq 2k$, and $|\Theta_{(t)}\setminus \Theta_{(t+1)}|\leq 2k$ as well as $|\Theta_{(t)}\cup \Theta_{(t+1)}|\leq 3k$ for all $t\geq 0$. When using these sets as subscript, for instance, $\mathbf{d}^{(t)}_{\Theta_{(t)}}$, we mean vectors formed by setting to zero all but those elements from the vector other than those in the set. 
	
	Note that, by definition of $d_{r}(\cdot,\cdot)$ we have that
	\begin{align}
	d_{r}(\mathbf{z}^{(t+1)}_{\Theta_{(t+1)}},\mathbf{x}_{\Theta_{(t+1)}}) = \min_{\theta\in[0,2\pi) }\lVert \mathbf{x}_{\Theta_{(t+1)}} - e^{-j\theta}\mathbf{z}^{(t+1)}_{\Theta_{(t+1)}} \rVert_{2}\leq \lVert \mathbf{x}_{\Theta_{(t+1)}} - e^{-j\theta_{(t)}}\mathbf{z}^{(t+1)}_{\Theta_{(t+1)}}\rVert_{2}.
	\label{eq:distanceMin}
	\end{align}
	Then, notice that by using the triangle inequality, one can write that
	\small{\begin{align}
		\label{eq:inequality1}
		\lVert \mathbf{x}_{\Theta_{(t+1)}} -e^{-j\theta_{(t)}}\mathbf{z}^{(t+1)}_{\Theta_{(t+1)}}\rVert_{2} &= \lVert \mathbf{x}_{\Theta_{(t+1)}} -e^{-j\theta_{(t)}}\mathbf{d}^{(t+1)}_{\Theta_{(t+1)}} +e^{-j\theta_{(t)}}\mathbf{d}^{(t+1)}_{\Theta_{(t+1)}} -e^{-j\theta_{(t)}}\mathbf{z}^{(t+1)}_{\Theta_{(t+1)}}\rVert_{2} \nonumber\\
		& \leq \lVert \mathbf{x}_{\Theta_{(t+1)}} -e^{-j\theta_{(t)}}\mathbf{d}^{(t+1)}_{\Theta_{(t+1)}}\rVert_{2} \nonumber\\
		&+ \lVert e^{-j\theta_{(t)}}\mathbf{z}^{(t+1)}_{\Theta_{(t+1)}} -e^{-j\theta_{(t)}}\mathbf{d}^{(t+1)}_{\Theta_{(t+1)}}\rVert_{2},
	\end{align}}\normalsize
	where in the last inequality the first term is the distance of $\mathbf{x}_{\Theta_{(t+1)}}$ to the estimate $\mathbf{d}^{(t+1)}_{\Theta_{(t+1)}}$ before hard thresholding, and the second is the distance between $\mathbf{d}^{(t+1)}_{\Theta_{(t+1)}}$ and its best $k$-approximation $\mathbf{z}^{(t+1)}_{\Theta_{(t+1)}}$ due to $|\Theta_{(t+1)}|\leq 2k$. The optimality of $\mathbf{z}^{(t+1)}_{\Theta_{(t+1)}}$ implies $\lVert  e^{-j\theta_{(t)}}\mathbf{z}^{(t+1)}_{\Theta_{(t+1)}} -e^{-j\theta_{(t)}}\mathbf{d}^{(t+1)}_{\Theta_{(t+1)}}\rVert_{2} \leq \lVert \mathbf{x}_{\Theta_{(t+1)}} -e^{-j\theta_{(t)}}\mathbf{d}^{(t+1)}_{\Theta_{(t+1)}}\rVert_{2}$. 
	
	Plugging the latter relationship into \eqref{eq:inequality1} yields
	\begin{equation}
	\lVert \mathbf{x}_{\Theta_{(t+1)}} -e^{-j\theta_{(t)}}\mathbf{z}^{(t+1)}_{\Theta_{(t+1)}}\rVert_{2} \leq 2\lVert \mathbf{x}_{\Theta_{(t+1)}} -e^{-j\theta_{(t)}}\mathbf{d}^{(t+1)}_{\Theta_{(t+1)}}\rVert_{2},
	\label{eq:normInequality}
	\end{equation}
	where the equality in \eqref{eq:inequality1} arises from restricting our analysis solely to the support $\Theta_{(t+1)}$ of $\mathbf{x}-e^{-j\theta_{(t)}}\mathbf{d}^{(t+1)}$. Then, considering \eqref{eq:definition1}, the vector $e^{-j\theta_{(t)}}\mathbf{d}^{(t)}$ can be rewritten as
	\begin{equation}
	e^{-j\theta_{(t)}}\mathbf{d}^{(t+1)} = e^{-j\theta_{(t)}}\mathbf{z}^{(t)}+\frac{2\tau}{m}\sum_{i=1}^{m}\left(\mathbf{a}_{i}^{H}\mathbf{h}^{(t)}+q_{i}\left(\frac{e^{-j\theta_{(t)}}\mathbf{a}_{i}^{H}\mathbf{z}^{(t)}}{\sqrt{\lvert\mathbf{a}_{i}^{H}\mathbf{z}^{(t)}\rvert^{2}+\mu_{(t)}^{2}}}-\frac{\mathbf{a}_{i}^{H}\mathbf{x} }{\lvert \mathbf{a}_{i}^{H}\mathbf{x} \rvert}\right)\right)\mathbf{a}_{i}.
	\label{eq:rewriD}
	\end{equation}
	\vspace{-0.1em}
	Combining \eqref{eq:normInequality} and \eqref{eq:rewriD} it can be obtained that
	\begin{align}
		\label{eq:hInequality}
		\frac{1}{2}\lVert \mathbf{h}^{(t+1)} \rVert_{2} \leq & \big\lVert \mathbf{x}_{\Theta^{(t+1)}} - e^{-j\theta_{(t)}}\mathbf{z}_{\Theta^{(t+1)}}^{(t)} - \frac{2\tau}{m} \sum_{i=1}^{m}\left(\mathbf{a}_{i}^{H}\mathbf{h}^{(t)}\right)\mathbf{a}_{i,\Theta{(t+1)}}  \\ \nonumber
		-& \frac{2\tau}{m} \sum_{i=1}^{m} \left(\frac{e^{-j\theta_{(t)}}\mathbf{a}_{i}^{H}\mathbf{z}^{(t)}}{\sqrt{\lvert\mathbf{a}_{i}^{H}\mathbf{z}^{(t)}\rvert^{2}+\mu_{(t)}^{2}}}-\frac{\mathbf{a}_{i}^{H}\mathbf{x} }{\lvert \mathbf{a}_{i}^{H}\mathbf{x} \rvert}\right)\lvert \mathbf{a}_{i}^{H}\mathbf{x} \rvert \mathbf{a}_{i,\Theta^{(t+1)}}  \big\rVert_{2}\\ \nonumber
		= & \big\lVert \mathbf{h}_{\Theta^{(t+1)}}^{(t)} - \frac{2\tau}{m} \sum_{i=1}^{m}\mathbf{a}_{i,\Theta^{(t+1)}}\mathbf{a}_{i,\Theta^{(t+1)}}^{H}\mathbf{h}_{\Theta^{(t+1)}}^{(t)} \\ \nonumber
		-& \frac{2\tau}{m} \sum_{i=1}^{m}\mathbf{a}_{i,\Theta{(t+1)}}\mathbf{a}_{i,\Theta^{(t)}\setminus\Theta^{(t+1)}}^{H}\mathbf{h}_{\Theta^{(t)}\setminus\Theta^{(t+1)}}^{(t)} \\ \nonumber
		-& \frac{2\tau}{m} \sum_{i=1}^{m} \left(\frac{e^{-j\theta_{(t)}}\mathbf{a}_{i}^{H}\mathbf{z}^{(t)}}{\sqrt{\lvert\mathbf{a}_{i}^{H}\mathbf{z}^{(t)}\rvert^{2}+\mu_{(t)}^{2}}}-\frac{\mathbf{a}_{i}^{H}\mathbf{x} }{\lvert \mathbf{a}_{i}^{H}\mathbf{x} \rvert}\right)\lvert \mathbf{a}_{i}^{H}\mathbf{x} \rvert \mathbf{a}_{i,\Theta^{(t+1)}}  \big\rVert_{2},
	\end{align}
	where the equality follows from re-writing $\mathbf{a}_{i}^{H}\mathbf{h}^{(t)}=\mathbf{a}_{i,\Theta^{(t)}}^{H}\mathbf{h}_{\Theta^{(t)}}^{(t)} = \mathbf{a}_{i,\Theta^{(t+1)}}^{H}\mathbf{h}_{\Theta^{(t+1)}}^{(t)} + \mathbf{a}_{i,\Theta^{(t)}\setminus\Theta^{(t+1)}}^{H}\mathbf{h}_{\Theta^{(t)}\setminus\Theta^{(t+1)}}^{(t)}$. Then, from \eqref{eq:hInequality} we have that 
	\begin{align}
		\label{eq:inequality2}
		\frac{1}{2}\lVert \mathbf{h}^{(t+1)} \rVert_{2} \leq & \overbrace{\left\lVert \mathbf{h}_{\Theta^{(t+1)}}^{(t)} - \frac{2\tau}{m} \sum_{i=1}^{m}\mathbf{a}_{i,\Theta^{(t+1)}}\mathbf{a}_{i,\Theta^{(t+1)}}^{H}\mathbf{h}_{\Theta^{(t+1)}}^{(t)} \right\rVert_{2}}^{v_{1}}\\ \nonumber
		+& \underbrace{\left\lVert \frac{2\tau}{m} \sum_{i=1}^{m}\mathbf{a}_{i,\Theta^{(t+1)}}\mathbf{a}_{i,\Theta^{(t)}\setminus\Theta^{(t+1)}}^{H}\mathbf{h}_{\Theta^{(t)}\setminus\Theta^{(t+1)}}^{(t)} \right\rVert_{2}}_{v_{2}} \\ \nonumber
		+& \underbrace{\left \lVert \frac{2\tau}{m} \sum_{i=1}^{m} \left(\frac{e^{-j\theta_{(t)}}\mathbf{a}_{i}^{H}\mathbf{z}^{(t)}}{\sqrt{\lvert\mathbf{a}_{i}^{H}\mathbf{z}^{(t)}\rvert^{2}+\mu_{(t)}^{2}}}-\frac{\mathbf{a}_{i}^{H}\mathbf{x} }{\lvert \mathbf{a}_{i}^{H}\mathbf{x} \rvert}\right)\lvert \mathbf{a}_{i}^{H}\mathbf{x} \rvert \mathbf{a}_{i,\Theta^{(t+1)}}  \right\rVert_{2}}_{v_{3}}.
	\end{align}
	
	Notice that from \eqref{eq:inequality2} it can be obtained that
	\begin{align}
	\label{eq:primerTerm}
	v_{1} =& \left\lVert \left(\mathbf{I}_{n} - \frac{2\tau}{m} \sum_{i=1}^{m}\mathbf{a}_{i,\Theta^{(t+1)}}\mathbf{a}_{i,\Theta^{(t+1)}}^{H}\right) \mathbf{h}_{\Theta^{(t+1)}}^{(t)} \right\rVert_{2} \\ \nonumber
	\leq & \left\lVert\mathbf{I}_{n} - \frac{2\tau}{m} \sum_{i=1}^{m}\mathbf{a}_{i,\Theta^{(t+1)}}\mathbf{a}_{i,\Theta^{(t+1)}}^{H} \right\rVert_{2\rightarrow 2}\lVert \mathbf{h}_{\Theta^{(t+1)}}^{(t)}\rVert_{2}\\ \nonumber
	\leq & \max\{1-2\tau\underline{\lambda},2\tau\overline{\lambda}-1 \}\lVert \mathbf{h}_{\Theta^{(t+1)}}^{(t)}\rVert_{2},
	\end{align}
	where $\lVert \cdot \rVert_{2\rightarrow 2}$ is the spectral norm and $\overline{\lambda},\underline{\lambda}>0$ are the largest and the smallest eigenvalues of $\frac{1}{m} \sum_{i=1}^{m}\mathbf{a}_{i,\Theta^{(t+1)}}\mathbf{a}_{i,\Theta^{(t+1)}}^{H}$, respectively. Then, by corollary 5.35 in \cite{vershynin2010introduction} it can be obtained that
	\begin{equation}
	\overline{\lambda} = \lambda_{max}\left(\frac{1}{m} \sum_{i=1}^{m}\mathbf{a}_{i,\Theta^{(t+1)}}\mathbf{a}_{i,\Theta^{(t+1)}}^{H}\right) \leq 1+\epsilon_{0},
	\label{eq:approximation1}
	\end{equation}
	with hight probability when $m\geq C(\epsilon_{0})2k$ for some constant $C(\epsilon_{0})$ depending on $\epsilon_{0}>0$. Moreover, by Lemma 5 in \cite{wang2016solving} we have that
	\begin{equation}
	\underline{\lambda} = \lambda_{min}\left(\frac{1}{m} \sum_{i=1}^{m}\mathbf{a}_{i,\Theta^{(t+1)}}\mathbf{a}_{i,\Theta^{(t+1)}}^{H}\right)\geq 1-\zeta_{1}-\epsilon_{1}
	\label{eq:approximation2}
	\end{equation} 
	when $m\geq C(\epsilon_{1})k$ for some constant $C(\epsilon_{1})$ depending on $\epsilon_{1}>0$. Taking the results in \eqref{eq:approximation1} and \eqref{eq:approximation2} into \eqref{eq:primerTerm} yields
	\begin{equation}
	v_{1}\leq \max\{ 1-2\tau(1-\zeta_{1}-\epsilon_{1}),2\tau(1+\epsilon_{0})-1 \}\lVert \mathbf{h}_{\Theta^{(t+1)}}^{(t)}\rVert_{2}.
	\label{eq:bound1}
	\end{equation}
	
	For the second term $v_{2}$ in \eqref{eq:inequality2}, fix any $\epsilon_{2}>0$. If the ratio number of measurements and unknowns $m/3k$, exceeds some sufficiently large constant, the next holds with probability of at least $1-2\exp\left(-c(\epsilon_{2})m\right)$ 
	\begin{align}
	\label{eq:secondTerm}
	v_{2} \leq& \left\lVert \frac{2\tau}{m} \sum_{i=1}^{m}\mathbf{a}_{i,\Theta^{(t+1)}}\mathbf{a}_{i,\Theta^{(t)}\setminus\Theta^{(t+1)}}^{H}\right\rVert_{2\rightarrow 2}\lVert \mathbf{h}_{\Theta^{(t)}\setminus\Theta^{(t+1)}}^{(t)}\rVert_{2}\\ \nonumber
	\leq& 2\tau \left\lVert \mathbf{I}_{n} - \frac{1}{m} \sum_{i=1}^{m}\mathbf{a}_{i,\Theta^{(t+1)}\cup \Theta^{(t)}}\mathbf{a}_{i,\Theta^{(t+1)}\cup \Theta^{(t)}}^{H}\right\rVert_{2\rightarrow 2}\lVert \mathbf{h}_{\Theta^{(t)}\setminus\Theta^{(t+1)}}^{(t)}\rVert_{2}\\ \nonumber
	\leq & 2\tau(\zeta_{2}+\epsilon_{2})\lVert \mathbf{h}_{\Theta^{(t)}\setminus\Theta^{(t+1)}}^{(t)}\rVert_{2},
	\end{align}
	in which the first inequality arises from the triangle inequality. The second inequality is obtained by Lemma 1 in \cite{blumensath2009iterative}. Similar to \eqref{eq:primerTerm}, the last inequality in \eqref{eq:secondTerm} is obtained by using corollary 5.35 in \cite{vershynin2010introduction} for some universal constants $c(\epsilon_{2})$ and $C(\epsilon_{2})$ such as $m\geq C(\epsilon_{2})2k$.
	
	Considering the last term $v_{3}$ in \eqref{eq:inequality2}, define $\mathbf{A}:=[\mathbf{a}_{1,\Theta^{(t+1)}},\cdots,\mathbf{a}_{m,\Theta^{(t+1)}}]$ and $\mathbf{b}^{(t)}:=[b^{(t)}_{1},\cdots,b_{m}^{(t)}]^{T}$ with $b_{i}^{(t)} = \left(\frac{e^{-j\theta_{(t)}}\mathbf{a}_{i}^{H}\mathbf{z}^{(t)}}{\sqrt{\lvert\mathbf{a}_{i}^{H}\mathbf{z}^{(t)}\rvert^{2}+\mu_{(t)}^{2}}}-\frac{\mathbf{a}_{i}^{H}\mathbf{x} }{\lvert \mathbf{a}_{i}^{H}\mathbf{x} \rvert}\right)\lvert \mathbf{a}_{i}^{H}\mathbf{x} \rvert$, for $i=1,\cdots,m$. Then, the $v_{3}$ term in \eqref{eq:inequality2} can be rewritten as
	\small{\begin{align}
		\label{eq:hterm}
		v_{3} = \left\lVert \frac{2\tau}{m}\mathbf{A}_{\Theta^{(t+1)}}^{T}\mathbf{b}^{(t)} \right\rVert_{2} \leq& 2\tau \left \lVert \frac{1}{\sqrt{m}} \mathbf{A}_{\Theta^{(t+1)}}^{T} \right\rVert_{2\rightarrow 2}\left\lVert \frac{1}{\sqrt{m}}\mathbf{b}^{(t)}\right\rVert_{2}\\ \nonumber
		\leq & 2\tau(1+\epsilon_{3})\left\lVert \frac{1}{\sqrt{m}}\mathbf{b}^{(t)}\right\rVert_{2},
		\end{align}}\normalsize
	where the second inequality is obtained by a standard matrix concentration result for any fixed $\epsilon_{3}>0$, with probability $1-2\exp(-c(\epsilon_{3})m)$, provided that $m\geq C(\epsilon_{3})k$, for some sufficiently large constant $C(\epsilon_{3})>0$.
	
	Notice that, from the definition of vector $\mathbf{b}^{(t)}$ it can be obtained that
	\begin{equation}
	\frac{1}{m}\left\lVert\mathbf{b}^{(t)}\right\rVert_{2}^{2} = \frac{1}{m} \sum_{i=1}^{m} \left\lvert\frac{e^{-j\theta_{(t)}}\mathbf{a}_{i}^{H}\mathbf{z}^{(t)}}{\sqrt{\lvert\mathbf{a}_{i}^{H}\mathbf{z}^{(t)}\rvert^{2}+\mu_{(t)}^{2}}}-\frac{\mathbf{a}_{i}^{H}\mathbf{x} }{\lvert \mathbf{a}_{i}^{H}\mathbf{x} \rvert}\right\rvert^{2} \lvert\mathbf{a}_{i}^{H}\mathbf{x}\rvert^{2}.
	\label{eq:normBt}
	\end{equation}
	
	Notice that from \eqref{eq:normBt} one can write that
	\begin{align}
		\left\lvert\frac{e^{-j\theta_{(t)}}\mathbf{a}_{i}^{H}\mathbf{z}^{(t)}}{\sqrt{\lvert\mathbf{a}_{i}^{H}\mathbf{z}^{(t)}\rvert^{2}+\mu_{(t)}^{2}}}-\frac{\mathbf{a}_{i}^{H}\mathbf{x} }{\lvert \mathbf{a}_{i}^{H}\mathbf{x} \rvert}\right\rvert \lvert\mathbf{a}_{i}^{H}\mathbf{x}\rvert &\leq  \lvert\mathbf{a}_{i}^{H}\mathbf{x}\rvert\left\lvert \frac{e^{-j\theta_{(t)}}\mathbf{a}_{i}^{H}\mathbf{z}^{(t)}}{\sqrt{\lvert\mathbf{a}_{i}^{H}\mathbf{z}^{(t)}\rvert^{2}+\mu_{(t)}^{2}}} - \frac{e^{-j\theta_{(t)}}\mathbf{a}_{i}^{H}\mathbf{z}^{(t)}}{\lvert\mathbf{a}_{i}^{H}\mathbf{x}\rvert}  \right\rvert \nonumber \\
		&+ \lvert\mathbf{a}_{i}^{H}\mathbf{x}\rvert \left\lvert \frac{e^{-j\theta_{(t)}}\mathbf{a}_{i}^{H}\mathbf{z}^{(t)}}{\lvert\mathbf{a}_{i}^{H}\mathbf{x}\rvert} - \frac{\mathbf{a}_{i}^{H}\mathbf{x}}{\lvert\mathbf{a}_{i}^{H}\mathbf{x} \rvert} \right\rvert \nonumber \\
		&\leq \left\lvert\sqrt{\lvert \mathbf{a}_{i}^{H}\mathbf{z}^{(t)} \rvert^{2}+\mu_{(t)}^{2}} -\lvert \mathbf{a}^{H}_{i}\mathbf{x}\rvert\right\rvert + \lvert \mathbf{a}_{i}^{H}\mathbf{h}^{(t)}\rvert
		\label{eq:normBt1}
	\end{align}
	in which the second inequality comes from the fact that
	\small{\begin{align}
	\lvert\mathbf{a}_{i}^{H}\mathbf{x}\rvert\left\lvert \frac{e^{-j\theta_{(t)}}\mathbf{a}_{i}^{H}\mathbf{z}^{(t)}}{\sqrt{\lvert\mathbf{a}_{i}^{H}\mathbf{z}^{(t)}\rvert^{2}+\mu_{(t)}^{2}}} - \frac{e^{-j\theta_{(t)}}\mathbf{a}_{i}^{H}\mathbf{z}^{(t)}}{\lvert\mathbf{a}_{i}^{H}\mathbf{x}\rvert} \right\rvert &\leq \frac{\lvert\mathbf{a}_{i}^{H}\mathbf{x}\rvert\lvert \mathbf{a}^{H}_{i}\mathbf{z}^{(t)}\rvert}{\lvert\mathbf{a}_{i}^{H}\mathbf{x}\rvert\sqrt{\lvert\mathbf{a}_{i}^{H}\mathbf{z}^{(t)}\rvert^{2}+\mu_{(t)}^{2}}} \left\lvert\sqrt{\lvert \mathbf{a}_{i}^{H}\mathbf{z}^{(t)} \rvert^{2}+\mu_{(t)}^{2}} -\lvert \mathbf{a}^{H}_{i}\mathbf{x}\rvert\right\rvert \nonumber\\
	&\leq \left\lvert\sqrt{\lvert \mathbf{a}_{i}^{H}\mathbf{z}^{(t)} \rvert^{2}+\mu_{(t)}^{2}} -\lvert \mathbf{a}^{H}_{i}\mathbf{x}\rvert\right\rvert.
	\label{eq:inequality4}
	\end{align}}\normalsize
	Then, from \eqref{eq:inequality4} it can be obtained that
	\begin{align}
	\left\lvert\sqrt{\lvert \mathbf{a}_{i}^{H}\mathbf{z}^{(t)} \rvert^{2}+\mu_{(t)}^{2}} -\lvert \mathbf{a}^{H}_{i}\mathbf{x}\rvert\right\rvert &\leq \mu_{(t)}+\left\lvert \lvert \mathbf{a}^{H}_{i}\mathbf{z}^{(t)}\rvert-\lvert \mathbf{a}^{H}_{i}\mathbf{x} \rvert\right\rvert \nonumber\\
	& \leq \mu_{(0)} + \left\lvert  e^{-j\theta_{(t)}}\mathbf{a}^{H}_{i}\mathbf{z}^{(t)}- \mathbf{a}^{H}_{i}\mathbf{x}\right\rvert \nonumber\\
	&=\mu_{(0)} + \lvert \mathbf{a}^{H}_{i}\mathbf{h}^{(t)}\rvert,
	\label{eq:inequality5}
	\end{align} 
	in which the second line comes after the triangular inequality. Then, putting together \eqref{eq:normBt1} and \eqref{eq:inequality5} one can conclude that
	\begin{align}
		\left\lvert\frac{e^{-j\theta_{(t)}}\mathbf{a}_{i}^{H}\mathbf{z}^{(t)}}{\sqrt{\lvert\mathbf{a}_{i}^{H}\mathbf{z}^{(t)}\rvert^{2}+\mu_{(t)}^{2}}}-\frac{\mathbf{a}_{i}^{H}\mathbf{x} }{\lvert \mathbf{a}_{i}^{H}\mathbf{x} \rvert}\right\rvert \lvert\mathbf{a}_{i}^{H}\mathbf{x}\rvert \leq \mu_{(0)} + 2\lvert \mathbf{a}^{H}_{i}\mathbf{h}^{(t)}\rvert.
		\label{eq:inequality6}
	\end{align}
	
	Combining \eqref{eq:normBt} and \eqref{eq:inequality6} it can be obtained that
	\begin{align}
	\frac{1}{m}\left\lVert\mathbf{b}^{(t)}\right\rVert_{2}^{2} \leq  \frac{1}{m} \sum_{i=1}^{m} \left( \mu_{(0)} + 2\lvert \mathbf{a}^{H}_{i}\mathbf{h}^{(t)}\rvert\right)^{2} = \frac{4}{m}\sum_{i=1}^{m} \lvert \mathbf{a}^{H}_{i}\mathbf{h}^{(t)}\rvert^{2} + \mu_{(0)}c,
	\label{eq:inequality7}
	\end{align}
	where $c = \mu_{(0)}\left(\frac{4}{m}\sum_{i=1}^{m} \lvert \mathbf{a}^{H}_{i}\mathbf{h}^{(t)}\rvert + \mu_{(0)} \right)$. Applying Lemma 7.8 in \cite{candWir}, we have that if $m\geq c_{0}\epsilon_{4}^{-2}n$, then with probability $1-2e^{-\epsilon_{4}^{2} m/2}$
	\begin{equation}
		(1-\epsilon_{4})\lVert \mathbf{h}^{(t)} \rVert_{2}^{2}\leq \frac{1}{m}\sum_{i=1}^{m} \lvert\mathbf{a}_{i}^{H}\mathbf{h}^{(t)}\rvert^{2} \leq (1+\epsilon_{4})\lVert \mathbf{h}^{(t)} \rVert_{2}^{2},
	\label{eq:approxH}
	\end{equation}
	holds for all vectors $\mathbf{h}^{(t)}$ and for any $\epsilon_{4} \in (0,1)$. Then, by combining \eqref{eq:inequality7} and \eqref{eq:approxH} it can be obtained that
	\begin{align}
		\frac{1}{m}\left\lVert\mathbf{b}^{(t)}\right\rVert_{2}^{2} \leq 4(1+\epsilon_{4}) \lVert \mathbf{h}^{(t)} \rVert_{2}^{2} + \mu_{(0)}c
	\label{eq:final1}
	\end{align}
	with probability at least $1-2e^{-\epsilon_{4}^{2} m/2}$.
		
	Notice that inequality in \eqref{eq:final1} is satisfied for all initial $\mu_{(0)}\in \mathbb{R}_{++}$. Then, by Theorem 1.1 in \cite{apostol1974mathematical}, one can conclude that 
	\begin{align}
	\label{eq:thirdTerm}
	\frac{1}{m}\left\lVert\mathbf{b}^{(t)}\right\rVert_{2}^{2} \leq& 4(1+\epsilon_{4})\lVert\mathbf{h}_{\Theta^{(t)}}^{(t)}\rVert^{2}_{2}\\ \nonumber
	\left\lVert\frac{1}{\sqrt{m}}\mathbf{b}^{(t)}\right\rVert_{2} \leq & 2(1+\epsilon_{5})\lVert\mathbf{h}_{\Theta^{(t)}}^{(t)}\rVert_{2},
	\end{align}
	for any $\epsilon_{5}>0$ with probability at least $1-2e^{-\epsilon_{5}^{2} m/2}$.
	
	Therefore, putting together the bounds in \eqref{eq:bound1}, \eqref{eq:secondTerm}, \eqref{eq:hterm} and \eqref{eq:thirdTerm} into \eqref{eq:inequality2}, one can write
	\begin{align}
		\nonumber \frac{1}{2}\lVert \mathbf{h}^{(t+1)} \rVert_{2} \leq & \max\{1-2\tau(1-\zeta_{1}-\epsilon_{1}),2\tau(1+\epsilon_{0})-1 \}\lVert \mathbf{h}_{\Theta^{(t+1)}}^{(t)}\rVert_{2} \\\nonumber
		+& 2\tau(\zeta_{2}+\epsilon_{2})\lVert \mathbf{h}_{\Theta^{(t)}\setminus\Theta^{(t+1)}}^{(t)}\rVert_{2} + 4\tau(1+\epsilon_{3})(1+\epsilon_{5})\lVert\mathbf{h}_{\Theta^{(t)}}^{(t)}\rVert_{2} \\ \nonumber
		\leq & \sqrt{2}\max\{ \vartheta , 2\tau(\zeta_{2}+\epsilon_{2}) \}\lVert \mathbf{h}^{(t)} \rVert_{2} + 4\tau(1+\epsilon_{3})(1+\epsilon_{5})\lVert\mathbf{h}^{(t)}\rVert_{2} \\
		\lVert \mathbf{h}^{(t+1)} \rVert_{2} \leq & \rho\lVert\mathbf{h}^{(t)}\rVert_{2},
		\end{align}
	in which the second inequality results from $\lVert \mathbf{h}_{\Theta^{(t+1)}}^{(t)} \rVert_{2} + \lVert \mathbf{h}_{\Theta^{(t)}\setminus\Theta^{(t+1)}}^{(t)}\rVert_{2}\leq \sqrt{2}\lVert \mathbf{h}^{(t)}\rVert_{2}$, with $\vartheta=\max\{1-2\tau(1-\zeta_{1}-\epsilon_{1}),2\tau(1+\epsilon_{0})-1 \}$. From the last inequality it can be obtained that 
	\begin{equation}
		\rho = 2\left( \sqrt{2}\max\{ \vartheta , 2\tau(\zeta_{2}+\epsilon_{2}) \} + 4\tau(1+\epsilon_{3})(1+\epsilon_{5})\right).
	\label{eq:rhoequ}
	\end{equation}
	Then, to ensure linear convergence, from \eqref{eq:rhoequ} it suffices to choose a step $\tau>0$ such that $\rho<1$ in \eqref{eq:rhoequ}. Letting $\eta=1-\rho\in (0,1)$, which justifies the linear convergence result in \eqref{eq:linearConvergence} with probability exceeding $1-2 e^{-c_{1}m}$ for some $c_{1}\geq 0$.
\end{proof}

\section*{Appendix C: Proof of Theorem \ref{theo:assumption}}
\label{app:proofTheo2}
\begin{proof}
	1) Suppose that $S_{\mu}(\mathbf{w})$ in Eq.(24) is unbounded, then there exists a sequence $\{\mathbf{x}_{\ell}\}\subseteq S_{\mu}(\mathbf{w})$ such that $\lVert \mathbf{x}_{\ell} \rVert_{2}\rightarrow \infty$. From the definition of the level set $S_{\mu}(\mathbf{w})$, it can be obtained that
	\begin{equation}
	g(\mathbf{x}_{\ell},\mu)\leq g(\mathbf{w},\mu)< \infty, \forall \ell\in \mathbb{N}.
	\end{equation}
	However, we assume that $\text{span}(\mathbf{a}_{1},\cdots,\mathbf{a}_{m})=\mathbb{C}^{n}$, then the fact that $\lVert \mathbf{x}_{\ell} \rVert_{2}\rightarrow \infty$ implies that the sequence $g(\mathbf{x}_{\ell},\mu)\rightarrow \infty$ according to the definition of the function $g$ in \eqref{eq:optimizationProblem}. Then  $g(\mathbf{x}_{\ell},\mu)\rightarrow \infty$ is a contradiction, because $g(\mathbf{x}_{\ell},\mu)<\infty$, $\forall \ell\in \mathbb{N}$. Thus, $S_{\mu}(\mathbf{w})$ is a bounded set.
	
	\vspace{1em}
	To prove the second part of Assumption 1, we proceed to show that for each function $h_{k,\mu}(\mathbf{x})=\left(\varphi_{\mu}(\lvert\mathbf{a}_{k}^{H}\mathbf{x}\rvert)-q_{k}\right)^{2}$ its Wirtinger derivative is Lipschitz. Thus, since $g(\mathbf{x},\mu)$ is the sum of the functions $h_{k,\mu}(\mathbf{x})$, then the Writinger derivative of $g(\mathbf{x},\mu)$ is Lipschitz as it is proven in Chapter 12 in \cite{eriksson2013applied}. 
	
	Notice that, the Wirtinger derivative of $h_{k,\mu}$ at point $\mathbf{w}\in \mathbb{C}^{n}$ is given by
	\begin{align}
	\partial h_{k,\mu}(\mathbf{w}) &= 2\left(\varphi_{\mu}\left(\lvert\mathbf{a}_{k}^{H}\mathbf{w}\rvert\right)-q_{k}\right)\frac{\mathbf{a}^{H}_{k}\mathbf{w}}{\varphi_{\mu}(\lvert \mathbf{a}_{k}^{H}\mathbf{w}\rvert)}\mathbf{a}_{k}\nonumber\\
	&=2\left((\mathbf{a}_{k}^{H}\mathbf{w})\mathbf{a}_{k}-q_{k}\frac{\mathbf{a}_{k}^{H}\mathbf{w}}{\varphi_{\mu}(\lvert \mathbf{a}_{k}^{H}\mathbf{w}\rvert)}\mathbf{a}_{k}\right).
	\label{eq:gradientH}
	\end{align}
	By definition of $d_{r}(\cdot,\cdot)$ in Eq.(3), it can be obtained that
	\begin{equation}
	\small{d_{r}(\partial h_{k,\mu}(\mathbf{w}_{1}),\partial h_{k,\mu}(\mathbf{w}_{2})) \leq \lVert e^{-j\theta}\partial h_{k,\mu}(\mathbf{w}_{1})-\partial h_{k,\mu}(\mathbf{w}_{2}) \rVert_{2},}
	\label{eq:ineqLips}			
	\end{equation}
	for any $\mathbf{w}_{1},\mathbf{w}_{2}\in S_{\mu}(\mathbf{w})$ and $\theta\in [0,2\pi)$. Then, combining \eqref{eq:gradientH} and \eqref{eq:ineqLips}, one can write that
	\small{\begin{align}
		d_{r}(\partial h_{k,\mu}(\mathbf{w}_{1}),\partial h_{k,\mu}(\mathbf{w}_{2})) &\leq 2\lVert \mathbf{a}_{k}\rVert_{2}\left\lvert e^{-j\theta}(\mathbf{a}_{k}^{H}\mathbf{w}_{1}) - \mathbf{a}_{k}^{H}\mathbf{w}_{2}  \right \rvert\nonumber\\
		&+2q_{k}\lVert \mathbf{a}_{k}\rVert_{2} \left\lvert \frac{e^{-j\theta}(\mathbf{a}^{H}_{k}\mathbf{w}_{1})}{\varphi_{\mu}(\lvert \mathbf{a}_{k}^{H}\mathbf{w}_{1}\rvert)} - \frac{\mathbf{a}^{H}_{k}\mathbf{w}_{2}}{\varphi_{\mu}(\lvert \mathbf{a}_{k}^{H}\mathbf{w}_{2}\rvert)}\right\rvert \nonumber \\
		&\leq 2\lVert \mathbf{a}_{k}\rVert_{2}^{2}\lVert e^{-j\theta}\mathbf{w}_{1}-\mathbf{w}_{2}\rVert_{2} \nonumber\\
		&+\frac{2q_{k}\lVert \mathbf{a}_{k}\rVert_{2}}{\mu^{2}} \left\lvert e^{-j\theta}(\mathbf{a}^{H}_{k}\mathbf{w}_{1})\varphi_{\mu}(\lvert \mathbf{a}_{k}^{H}\mathbf{w}_{2}\rvert) -  (\mathbf{a}^{H}_{k}\mathbf{w}_{2})\varphi_{\mu}(\lvert \mathbf{a}_{k}^{H}\mathbf{w}_{1}\rvert) \right\rvert,
		\label{eq:gradientLips1}
		\end{align}}\normalsize
	where the first inequality is obtained using the triangular inequality and the second comes from the fact that $\varphi_{\mu}(t)\geq \mu>0$ for all $t\in \mathbb{R}$, and using the Cauchy-Schwarz inequality. Then, from \eqref{eq:gradientLips1} it can be obtained that
	\small{\begin{align}
	\left\lvert e^{-j\theta}(\mathbf{a}^{H}_{k}\mathbf{w}_{1})\varphi_{\mu}(\lvert \mathbf{a}_{k}^{H}\mathbf{w}_{2}\rvert) -  (\mathbf{a}^{H}_{k}\mathbf{w}_{2})\varphi_{\mu}(\lvert \mathbf{a}_{k}^{H}\mathbf{w}_{1}\rvert) \right\rvert &\leq\left \lvert \varphi_{\mu}(\lvert  \mathbf{a}_{k}^{H}\mathbf{w}_{2}\rvert) \left[e^{-j\theta}(\mathbf{a}^{H}_{k}\mathbf{w}_{1})-\mathbf{a}^{H}_{k}\mathbf{w}_{2}\right] \right \rvert \nonumber\\
	&+\left\lvert (\mathbf{a}_{k}^{H}\mathbf{w}_{2})\left[\varphi_{\mu}(\lvert \mathbf{a}_{k}^{H}\mathbf{w}_{1}\rvert)-\varphi_{\mu}(\lvert \mathbf{a}_{k}^{H}\mathbf{w}_{2}\rvert)\right] \right\rvert\nonumber\\
	&\leq M_{\varphi_{\mu}}\lVert \mathbf{a}_{k}\rVert_{2}\lVert e^{-j\theta}\mathbf{w}_{1}-\mathbf{w}_{2}\rVert_{2} \nonumber\\
	&+ M_{S_{\mu}}\lVert \mathbf{a}_{k}\rVert_{2} \left\lvert \varphi_{\mu}(\lvert \mathbf{a}_{k}^{H}\mathbf{w}_{1}\rvert)-\varphi_{\mu}(\lvert \mathbf{a}_{k}^{H}\mathbf{w}_{2}\rvert) \right\rvert,
	\label{eq:gradientLips2}
	\end{align}}\normalsize
	where the second inequality is obtained using the triangular inequality and the following two reasons. First, $\varphi_{\mu}(\lvert \mathbf{a}_{k}^{H}\mathbf{z}\rvert)$ is a bounded function in $S_{\mu}(\mathbf{w})$ for any $\mathbf{z}\in S_{\mu}(\mathbf{w})$, since $S_{\mu}(\mathbf{w})$ is a bounded set as was established in the previous item, $i.e.$ $\varphi_{\mu}(\lvert \mathbf{a}_{k}^{H}\mathbf{z}\rvert)\leq M_{\varphi_{\mu}}$ for some constant $M_{\varphi_{\mu}}\in \mathbb{R}_{+}$. Second, $S_{\mu}(\mathbf{w})$ is a bounded set, then $\lVert\mathbf{z}\rVert_{2}\leq M_{S_{\mu}}, \forall \mathbf{z}\in S_{\mu}(\mathbf{w})$ for some constant $M_{S_{\mu}}\in \mathbb{R}_{+}$. Hence, considering that $\varphi_{\mu}(\cdot)$ is a Lipschitz function with constant $L_{\varphi_{\mu}}=1$, then from \eqref{eq:gradientLips2} we have that
	\begin{align}
	\left\lvert \varphi_{\mu}(\lvert \mathbf{a}_{k}^{H}\mathbf{w}_{1}\rvert)-\varphi_{\mu}(\lvert \mathbf{a}_{k}^{H}\mathbf{w}_{2}\rvert) \right\rvert &\leq \left\lvert \lvert \mathbf{a}_{k}^{H}\mathbf{w}_{1}\rvert-\lvert \mathbf{a}_{k}^{H}\mathbf{w}_{2}\rvert \right\rvert \nonumber\\
	&\leq \left\lvert e^{-j\theta}(\mathbf{a}_{k}^{H}\mathbf{w}_{1})-\mathbf{a}_{k}^{H}\mathbf{w}_{2} \right\rvert \nonumber\\
	&\leq \lVert \mathbf{a}_{k}\rVert_{2}\lVert e^{-j\theta}\mathbf{w}_{1}-\mathbf{w}_{2}\rVert_{2},
	\label{eq:gradientLips3}
	\end{align}
	where the second and third lines come from the triangular and Cauchy-Schwarz inequality, respectively, and it is valid for all $\theta\in [0,2\pi)$. Therefore, combining \eqref{eq:gradientLips1}, \eqref{eq:gradientLips2} and \eqref{eq:gradientLips3}, we have that
	\begin{align}
	d_{r}(\partial h_{k,\mu}(\mathbf{w}_{1}),\partial h_{k,\mu}(\mathbf{w}_{2})) \leq \tilde{L}_{h_{k,\mu}}\left\lVert  e^{-j\theta}\mathbf{w}_{1}- \mathbf{w}_{2} \right\rVert_{2},
	\label{eq:gradientLips4}
	\end{align}
	with $\tilde{L}_{h_{k,\mu}}=2\lVert \mathbf{a}_{k}\rVert_{2}^{2} + \frac{2q_{k}M_{\varphi_{\mu}}\lVert \mathbf{a}_{k}\rVert^{2}_{2}}{\mu^{2}} + \frac{2q_{k}M_{S_{\mu}}\lVert \mathbf{a}_{k}\rVert^{3}_{2}}{\mu^{2}}$. Notice that, for the $i.i.d.$ Gaussian vectors $\mathbf{a}_{k}$, $\lVert \mathbf{a}_{k}\rVert^{2}_{2}\leq 2.3n$ holds with probability at least $1-me^{-n/2}$ \cite{wang2016solving}. Then, we have that $\tilde{L}_{h_{k,\mu}}\leq 4.6n+\frac{4.6q_{k}n}{\mu^{2}}+\frac{13n^{3/2}M_{S_{\mu}}}{\mu^{2}}=L_{h_{k,\mu}}$ with probability exceeding $1-me^{-n/2}$. Further, taking the value of $\theta$ that minimizes the term $\lVert e^{-j\theta}\mathbf{w}_{1}-\mathbf{w}_{2} \rVert_{2}$, from \eqref{eq:gradientLips4}, it can be concluded that
	\begin{align}
	d_{r}(\partial h_{k,\mu}(\mathbf{w}_{1}),\partial h_{k,\mu}(\mathbf{w}_{2})) \leq L_{h_{k,\mu}}d_{r}(\mathbf{w}_{1}, \mathbf{w}_{2}).
	\label{eq:gradientLips5}
	\end{align}
	Then, since $g(\mathbf{x},\mu)$ is the sum of the functions $h_{k,\mu}(\mathbf{x})$, then the Writinger derivative of $g(\mathbf{x},\mu)$ is Lipschitz as it is proven in Chapter 12 in \cite{eriksson2013applied}. Thus, from \eqref{eq:gradientLips5} the result holds.
\end{proof}

\section*{Appendix D: Proof of Theorem 3}
Before to prove Theorem \ref{eq:convergence}, we need to introduce first the contraction mapping definition and the Hahn Banach Fixed Point theorem as follows.

\begin{definition}{\textit{Contraction mapping:}}
	Let $f: (\mathbb{C}^{n},d_{r}(\cdot,\cdot))\rightarrow \mathbb{R}$ be a function. Then,  $f(\mathbf{x})$ is a contraction mapping if there is some nonnegative $\beta \in [0,1)$ such that 
	\begin{equation}
	d_{r}(f(\mathbf{x}),f(\mathbf{y})) \leq \beta d_{r}(\mathbf{x},\mathbf{y}), \forall \mathbf{x},\mathbf{y} \in \mathbb{C}^{n}.
	\end{equation}
	\label{def:contraction}
\end{definition}

\begin{theorem}{\textit{Hahn Banach Fixed Point:}}
	\label{theo:banach}
	Let  $f: (\mathbb{C}^{n},d_{r}(\cdot,\cdot))\rightarrow \mathbb{R}$ be a contraction mapping. Then $f(\mathbf{x})$ admits a unique fixed-point $\mathbf{x}^{*}\in \mathbb{C}^{n}$. ($i.e.$ $f(\mathbf{x}^{*})=\mathbf{x}^{*}$). (The proof of Theorem \ref{theo:banach} can be found in \cite{kreyszig1989introductory})
\end{theorem}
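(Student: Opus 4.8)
The plan is to run the classical Picard iteration argument, adapted to the pseudometric $d_r(\cdot,\cdot)$; note first that for the fixed-point conclusion to be meaningful $f$ must be a self-map $\mathbb{C}^{n}\rightarrow\mathbb{C}^{n}$, which I take as the intended reading. I would fix an arbitrary starting point $\mathbf{x}_{0}\in\mathbb{C}^{n}$ and define the iterates $\mathbf{x}_{\ell+1}=f(\mathbf{x}_{\ell})$ for $\ell\geq 0$. Applying the contraction property of Definition \ref{def:contraction} repeatedly, I would show by induction that $d_{r}(\mathbf{x}_{\ell+1},\mathbf{x}_{\ell})\leq \beta^{\ell}\, d_{r}(\mathbf{x}_{1},\mathbf{x}_{0})$, so that consecutive iterates draw geometrically closer.

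Next I would prove that $\{\mathbf{x}_{\ell}\}$ is Cauchy. Using the triangle inequality for $d_{r}$ together with the previous bound, for any $m>\ell$ one obtains $d_{r}(\mathbf{x}_{m},\mathbf{x}_{\ell})\leq \sum_{j=\ell}^{m-1}\beta^{j}\, d_{r}(\mathbf{x}_{1},\mathbf{x}_{0})\leq \frac{\beta^{\ell}}{1-\beta}\, d_{r}(\mathbf{x}_{1},\mathbf{x}_{0})$, where the finiteness of the geometric series relies on $\beta\in[0,1)$. Since the right-hand side tends to $0$ as $\ell\rightarrow\infty$, the sequence is Cauchy. Invoking completeness of the underlying space, the Cauchy sequence converges to a limit $\mathbf{x}^{*}$. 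To see $\mathbf{x}^{*}$ is a fixed point, I would use that a contraction is continuous, so $f(\mathbf{x}^{*})=f(\lim_{\ell}\mathbf{x}_{\ell})=\lim_{\ell}f(\mathbf{x}_{\ell})=\lim_{\ell}\mathbf{x}_{\ell+1}=\mathbf{x}^{*}$, all equalities understood up to the unimodular ambiguity encoded in $d_{r}$.

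For uniqueness, suppose $\mathbf{x}^{*}$ and $\mathbf{y}^{*}$ are both fixed. Then $d_{r}(\mathbf{x}^{*},\mathbf{y}^{*})=d_{r}(f(\mathbf{x}^{*}),f(\mathbf{y}^{*}))\leq \beta\, d_{r}(\mathbf{x}^{*},\mathbf{y}^{*})$, and since $\beta<1$ this forces $d_{r}(\mathbf{x}^{*},\mathbf{y}^{*})=0$, i.e. the two points coincide modulo a global phase.

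The delicate point is that $d_{r}$ is only a pseudometric: $d_{r}(\mathbf{w},e^{j\theta}\mathbf{w})=0$, so it is a genuine metric only on the quotient $\mathbb{C}^{n}/\!\sim$ of vectors identified up to a unimodular constant. The hard part is therefore to justify that this quotient is complete, so that the Cauchy argument delivers an actual limit, and to interpret ``unique fixed point'' correctly in this quotient sense. Since $\mathbb{C}^{n}$ with the Euclidean norm is complete and $d_{r}$ is obtained by minimizing over the compact phase group $[0,2\pi)$, I would establish completeness by passing to the quotient metric and checking that the phase minimization preserves the Cauchy property; this inherited completeness, together with the steps above, closes the argument exactly as in Kreyszig \cite{kreyszig1989introductory}.
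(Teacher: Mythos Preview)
The paper does not supply its own proof of this theorem; it simply defers to Kreyszig \cite{kreyszig1989introductory}, and your Picard-iteration argument is exactly the classical Banach contraction proof found there, so the approaches coincide. You also correctly catch two points the paper glosses over: the codomain must be $\mathbb{C}^{n}$ rather than $\mathbb{R}$ for the fixed-point equation $f(\mathbf{x}^{*})=\mathbf{x}^{*}$ to make sense, and $d_{r}$ is only a pseudometric, so uniqueness and the limit must be read modulo the global phase; your handling of both is appropriate.
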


\begin{proof}
	Define $\mathcal{M}=\{t \mbox{ }| \mbox{ } \mu_{(t+1)}=\gamma_{1}\mu_{(t)} \}$ and suppose that $\mathcal{M}$ is a finite set. Then, according to Line 5 in Algorithm \ref{alg:smothing} there exists an integer $\overline{t}$ such that $\lVert \partial g(\mathbf{z}^{(t)},\mu_{(t-1)}) \rVert_{2} \geq \gamma \mu_{(t-1)}$ for all $t>\overline{t}$. Taking $\overline{\mu}=\mu_{(\overline{t})}$, the optimization problem in \eqref{eq:optimizationProblem} solved by Algorithm \ref{alg:smothing}, reduces to solve 
	\begin{equation}
	\min_{\lVert \mathbf{x} \rVert_{0}=k} \frac{1}{m}\sum_{i=1}^{m} \left(\varphi_{\overline{\mu}}(\lvert\mathbf{a}_{i}^{H}\mathbf{x}\rvert)-q_{i}\right)^{2}.
	\label{eq:secondOpti}
	\end{equation}
	Notice that, Theorem \ref{theo:contrac} provides that the sequence $\{\mathbf{z}^{(t)}\}_{t\geq 1}$, generated by Algorithm \ref{alg:smothing} in Line 4 produces a monotonically decreasing sequence $\{g(\mathbf{z}^{(t)},\overline{\mu})\}_{t\geq 1}$, for the fixed $\overline{\mu}$. Further, from \ref{eq:linearConvergence} it can be obtained that 
	\begin{equation}
	d_{r}(\mathbf{z}^{(t+1)},\mathbf{x}) \leq (1-\eta)d_{r}(\mathbf{z}^{(t)},\mathbf{x}), \forall t\in \mathbb{N},
	\label{eq:localContra}
	\end{equation}
	where $\eta \in (0,1)$. Then, from \eqref{eq:localContra} it can be concluded that the thresholding step in Algorithm \ref{alg:smothing} is contractive according to Definition \ref{def:contraction}. Then, from Theorem \ref{theo:banach} it can be obtained that there exists a fixed point, which means that $\mathbf{z}^{(t_{1}+1)} = \mathbf{z}^{(t_{1})}$, for some $t_{1}\in \mathbb{N}$. Then, considering this previous condition and the thresholded step of the reduced optimization problem in \eqref{eq:secondOpti}, it can be obtained that
	\begin{equation}
	\mathbf{z}^{(t_{1}+1)} = \mathcal{H}_{k}(\mathbf{z}^{(t_{1})} - \tau\partial g(\mathbf{z}^{(t_{1})},\overline{\mu}) )  = \mathbf{z}^{(t_{1})}.
	\label{eq:fixedPoint}
	\end{equation}
	Thus, from \eqref{eq:fixedPoint} it can be concluded such that 
	\begin{equation}
	\liminf_{t\rightarrow \infty}\lVert \partial g(\mathbf{z}^{(t)},\mu_{(t-1)}) \rVert_{2}=0,
	\end{equation}
	which contradicts the fact that $\lVert \partial g(\mathbf{z}^{(t)},\mu_{(t-1)}) \rVert_{2} \geq \gamma \mu_{(t-1)}$. This shows that $\mathcal{M}$ must be infinite and $\lim_{t\rightarrow \infty} \mu_{(t)}=0$. Thus, since $\mathcal{M}$ is infinite one can assume $\mathcal{M}=\{t_{0},t_{1},\cdots \}$ with $t_{0}<t_{1}<\cdots$. Then, it can be expressed that
	\begin{equation}
	\liminf_{t\rightarrow \infty}\lVert \partial g(\mathbf{z}^{(t)},\mu_{(t-1)}) \rVert_{2}\leq \gamma \lim_{t\rightarrow \infty } \mu_{(t)}=0,
	\end{equation}
	which is the desired result.
\end{proof}

\section*{Appendix E: Proof of Theorem 4}
\label{app:smoothpro}
\begin{proof} 
	Let $\mathbf{h} = e^{-j\theta(z)}\mathbf{z}-\mathbf{x}$ with $\theta(z) = \argmin_{\theta\in[0,2\pi) }\lVert e^{-j\theta}\mathbf{z}-\mathbf{x} \rVert_{2}$, for a given $\mathbf{z}\in \mathbb{C}^{n}$. Then, by definition of $d_{r}(\cdot,\cdot)$ in \eqref{eq:distance} we have that $d_{r}(\mathbf{z},\mathbf{x})=\lVert\mathbf{h} \rVert_{2}$. From \eqref{eq:gradient} it can be obtained that
	\begin{equation}
	\lVert \partial g(\mathbf{z},\mu)\rVert_{2} = \left \lVert \frac{2}{m}\sum_{i=1}^{m}\left(\mathbf{a}_{i}^{H}\mathbf{z} - q_{i}\frac{\mathbf{a}_{i}^{H}\mathbf{z}}{\sqrt{\lvert\mathbf{a}_{i}^{H}\mathbf{z}\rvert^{2}+\mu^{2}}}\right)\mathbf{a}_{i} \right\rVert_{2},
	\label{eq:gradient1}
	\end{equation}
	Notice that \eqref{eq:gradient1} can be rewritten as
	\begin{equation}
	\lVert \partial g(\mathbf{z},\mu)\rVert_{2} = \left \lVert \frac{2}{m}\sum_{i=1}^{m}\left(e^{-j\theta(z)}\mathbf{a}_{i}^{H}\mathbf{z} - q_{i}\frac{e^{-j\theta(z)}\mathbf{a}_{i}^{H}\mathbf{z}}{\sqrt{\lvert\mathbf{a}_{i}^{H}\mathbf{z}\rvert^{2}+\mu^{2}}}\right)\mathbf{a}_{i} \right\rVert_{2}.
	\end{equation}
	Then, note that
	\begin{align}
		\lVert \partial g(\mathbf{z},\mu)\rVert_{2} &= \left\lVert \frac{2}{m}\sum_{i=1}^{m} \left(\mathbf{a}_{i}^{H}\mathbf{h} - q_{i}\left[\frac{e^{-j\theta(z)}\mathbf{a}_{i}^{H}\mathbf{z}}{\sqrt{\lvert\mathbf{a}_{k}^{H}\mathbf{z}\rvert^{2}+\mu^{2}}} - \frac{\mathbf{a}_{i}^{H}\mathbf{x}}{\lvert\mathbf{a}_{i}^{H}\mathbf{x}\rvert} \right]\right)\mathbf{a}_{i} \right\rVert_{2} \nonumber \\
		&\leq \left\lVert \frac{2}{m}\sum_{i=1}^{m} \mathbf{a}_{i}\mathbf{a}_{i}^{H}\mathbf{h} \right\rVert_{2} + \frac{2}{m}\sum_{i=1}^{m} q_{i} \left\lVert \mathbf{a}_{i} \right\rVert_{2}\upsilon_{i},
	\end{align}
	where $\upsilon_{i}= \left\lvert \frac{e^{-j\theta(z)}\mathbf{a}_{i}^{H}\mathbf{z}}{\sqrt{\lvert\mathbf{a}_{i}^{H}\mathbf{z}\rvert^{2}+\mu^{2}}} - \frac{\mathbf{a}_{i}^{H}\mathbf{x}}{\lvert\mathbf{a}_{i}^{H}\mathbf{x}\rvert} \right\rvert$. Then, from the above inequality it can be obtained that
	\begin{align}
	\lVert \partial g(\mathbf{z},\mu)\rVert_{2}&\leq 2 \left\lVert \frac{1}{m}\sum_{i=1}^{m} \mathbf{a}_{i}\mathbf{a}_{i}^{H} \right\rVert_{2\rightarrow 2}\lVert \mathbf{h} \rVert_{2} + \frac{2\sqrt{2.3n}}{m}\sum_{i=1}^{m} q_{i}\upsilon_{i}
	\label{eq:smooth1}
	\end{align}
	where $\lVert \cdot \rVert_{2\rightarrow 2}$ represents the spectral norm and for the $i.i.d.$ Gaussian vectors $\mathbf{a}_{i}$, $\max_{i\in\{1,\cdots,m \}}\lVert \mathbf{a}_{i}\rVert_{2}\leq \sqrt{2.3n}$ holds with probability at least $1-me^{-n/2}$ \cite{wang2016solving}. Notice that by corollary 5.35 in \cite{vershynin2010introduction} it can be obtained that
	\begin{equation}
	\left\lVert \frac{1}{m}\sum_{i=1}^{m} \mathbf{a}_{i}\mathbf{a}_{i}^{H} \right\rVert_{2\rightarrow 2} \leq 1+\epsilon_{0},
	\label{eq:approximation11}
	\end{equation}
	with probability at least $1-2e^{-me_{0}^{2}/2}$ when $m\geq C(\epsilon_{0})n$ for some constant $C(\epsilon_{0})$ depending on $\epsilon_{0}>0$. Thus, from \eqref{eq:smooth1} and \eqref{eq:approximation11} it can be concluded that
	\begin{equation}
	\lVert \partial g(\mathbf{z},\mu)\rVert_{2} \leq 2 (1+\epsilon_{0})\lVert \mathbf{h} \rVert_{2} +  \frac{2\sqrt{2.3n}}{m}\sum_{i=1}^{m} q_{i}\upsilon_{i}.
		\label{eq:smooth2}
	\end{equation}
	On the other hand, notice that from \eqref{eq:smooth2} it can be obtained that
	\begin{align}
	\sum_{i=1}^{m} q_{i}\upsilon_{i} &\leq \sum_{i=1}^{m}q_{i} \left\lvert \frac{e^{-j\theta(z)}\mathbf{a}_{i}^{H}\mathbf{z}}{\sqrt{\lvert\mathbf{a}_{i}^{H}\mathbf{z}\rvert^{2}+\mu^{2}}} - \frac{e^{-j\theta(z)}\mathbf{a}_{i}^{H}\mathbf{z}}{\lvert\mathbf{a}_{i}^{H}\mathbf{x}\rvert}  \right\rvert  + \sum_{i=1}^{m}q_{i} \left\lvert \frac{e^{-j\theta(z)}\mathbf{a}_{i}^{H}\mathbf{z}}{\lvert\mathbf{a}_{i}^{H}\mathbf{x}\rvert} - \frac{\mathbf{a}_{i}^{H}\mathbf{x}}{\lvert\mathbf{a}_{i}^{H}\mathbf{x} \rvert} \right\rvert \nonumber \\
	&\leq \sum_{i=1}^{m}\left\lvert\sqrt{\lvert \mathbf{a}_{i}^{H}\mathbf{z} \rvert^{2}+\mu^{2}} -\lvert \mathbf{a}^{H}_{i}\mathbf{x}\rvert\right\rvert +\sum_{i=1}^{m}\lvert \mathbf{a}_{i}^{H}\mathbf{h}\rvert,
	\label{eq:inequality3}
	\end{align}
	in which the second inequality comes from the fact that
	\begin{align}
	q_{i}\left\lvert \frac{e^{-j\theta(z)}\mathbf{a}_{i}^{H}\mathbf{z}}{\sqrt{\lvert\mathbf{a}_{i}^{H}\mathbf{z}\rvert^{2}+\mu^{2}}} - \frac{e^{-j\theta(z)}\mathbf{a}_{i}^{H}\mathbf{z}}{\lvert\mathbf{a}_{i}^{H}\mathbf{x}\rvert} \right\rvert &\leq \frac{q_{i}\lvert \mathbf{a}^{H}_{i}\mathbf{z}\rvert}{\lvert\mathbf{a}_{i}^{H}\mathbf{x}\rvert\sqrt{\lvert\mathbf{a}_{i}^{H}\mathbf{z}\rvert^{2}+\mu^{2}}} \left\lvert\sqrt{\lvert \mathbf{a}_{i}^{H}\mathbf{z} \rvert^{2}+\mu^{2}} -\lvert \mathbf{a}^{H}_{i}\mathbf{x}\rvert\right\rvert \nonumber\\
	&\leq \left\lvert\sqrt{\lvert \mathbf{a}_{i}^{H}\mathbf{z} \rvert^{2}+\mu^{2}} -\lvert \mathbf{a}^{H}_{i}\mathbf{x}\rvert\right\rvert.
	\label{eq:inequality44}
	\end{align}
	Then, from \eqref{eq:inequality44} it can be obtained that
	\begin{align}
	\left\lvert\sqrt{\lvert \mathbf{a}_{i}^{H}\mathbf{z} \rvert^{2}+\mu^{2}} -\lvert \mathbf{a}^{H}_{i}\mathbf{x}\rvert\right\rvert &\leq \mu +\lvert \lvert \mathbf{a}^{H}_{i}\mathbf{z}\rvert-\lvert \mathbf{a}^{H}_{i}\mathbf{x} \rvert\rvert \nonumber\\
	& \leq \mu + \lvert  e^{-j\theta(z)}\mathbf{a}^{H}_{i}\mathbf{z}- \mathbf{a}^{H}_{i}\mathbf{x}\rvert \nonumber\\
	&=\mu + \lvert \mathbf{a}^{H}_{i}\mathbf{h}\rvert,
	\label{eq:inequality55}
	\end{align} 
	in which the second line comes after the triangular inequality. Thus, combining \eqref{eq:smooth2}, \eqref{eq:inequality3} and \eqref{eq:inequality55} it can be obtained that
	\begin{equation}
	\lVert \partial g(\mathbf{z},\mu)\rVert_{2} \leq 2 (1+\epsilon_{0})\lVert \mathbf{h} \rVert_{2} +  \frac{4\sqrt{2.3n}}{m}\sum_{i=1}^{m} \lvert \mathbf{a}^{H}_{i}\mathbf{h}\rvert + 2\sqrt{2.3n}\mu.
	\label{eq:smooth3}
	\end{equation}
	Note that the inequality in \eqref{eq:smooth3} is satisfied for all $\mu\in \mathbb{R}_{++}$. Then by Theorem 1.1 in \cite{apostol1974mathematical}, one can conclude that
	\begin{equation}
	\lVert \partial g(\mathbf{z},\mu)\rVert_{2} \leq \beta \lVert \mathbf{h} \rVert_{2} +  \frac{\rho}{m}\sum_{i=1}^{m} \lvert \mathbf{a}^{H}_{i}\mathbf{h}\rvert,
	\label{eq:inequality8}
	\end{equation}
	where $\beta = 2 (1+\epsilon_{0})$, $\rho = 4\sqrt{2.3n}$, with probability $1-me^{-n/2}$. Thus, given the fact that $d_{r}(\mathbf{z},\mathbf{x}) = \lVert \mathbf{h} \rVert_{2}$, then from \eqref{eq:inequality8} we can conclude that
	\begin{align}
	\lVert \partial g(\mathbf{z},\mu)\rVert_{2} \leq \beta d_{r}(\mathbf{z},\mathbf{x}) +  \frac{\rho}{m}\sum_{i=1}^{m} \lvert \mathbf{a}^{H}_{i}\mathbf{h}\rvert,
	\label{eq:smoothfinal}
	\end{align}
	with probability at least $1-me^{-n/2}$. Finally, from \eqref{eq:smoothfinal} the result holds.
\end{proof}

\bibliographystyle{spphys}       
\bibliography{sample}

\begin{thebibliography}{10}
\providecommand{\url}[1]{{#1}}
\providecommand{\urlprefix}{URL }
\expandafter\ifx\csname urlstyle\endcsname\relax
  \providecommand{\doi}[1]{DOI \discretionary{}{}{}#1}\else
  \providecommand{\doi}{DOI \discretionary{}{}{}\begingroup
  \urlstyle{rm}\Url}\fi

\bibitem{xu2015overcoming}
Y.~Xu, Z.~Ren, K.K. Wong, K.~Tsia, Overcoming the limitation of phase retrieval
  using gerchberg--saxton-like algorithm in optical fiber time-stretch systems,
  Optics letters \textbf{40}(15), 3595 (2015)

\bibitem{fienup1987phase}
C.~Fienup, J.~Dainty, Phase retrieval and image reconstruction for astronomy,
  Image Recovery: Theory and Application pp. 231--275 (1987)

\bibitem{mayo2003x}
S.~Mayo, T.~Davis, T.~Gureyev, P.~Miller, D.~Paganin, A.~Pogany, A.~Stevenson,
  S.~Wilkins, X-ray phase-contrast microscopy and microtomography, Optics
  Express \textbf{11}(19), 2289 (2003)

\bibitem{millane1990phase}
R.P. Millane, Phase retrieval in crystallography and optics, JOSA A
  \textbf{7}(3), 394 (1990)

\bibitem{pinilla2018coded1}
S.~Pinilla, H.~Garc{\'\i}a, L.~D{\'\i}az, J.~Poveda, H.~Arguello, Coded
  aperture design for solving the phase retrieval problem in x-ray
  crystallography, Journal of Computational and Applied Mathematics
  \textbf{338}, 111 (2018)

\bibitem{pinilla2018coded}
S.~Pinilla, J.~Poveda, H.~Arguello, Coded diffraction system in x-ray
  crystallography using a boolean phase coded aperture approximation, Optics
  Communications \textbf{410}, 707 (2018)

\bibitem{xcrys}
M.~Smyth, J.~Martin, x ray crystallography, Journal of Clinical Pathology
  \textbf{53}(1), 8 (2000)

\bibitem{chen2015solving}
Y.~Chen, E.~Candes, in \emph{Advances in Neural Information Processing Systems}
  (2015), pp. 739--747

\bibitem{candes2015phase}
E.J. Candes, X.~Li, M.~Soltanolkotabi, Phase retrieval from coded diffraction
  patterns, Applied and Computational Harmonic Analysis \textbf{39}(2), 277
  (2015)

\bibitem{fienup1982phase}
J.R. Fienup, Phase retrieval algorithms: a comparison, Applied optics
  \textbf{21}(15), 2758 (1982)

\bibitem{candes2014solving}
E.J. Cand{\`e}s, X.~Li, Solving quadratic equations via phaselift when there
  are about as many equations as unknowns, Foundations of Computational
  Mathematics \textbf{14}(5), 1017 (2014)

\bibitem{candWir}
E.J. Candes, X.~Li, M.~Soltanolkotabi, Phase retrieval via wirtinger flow:
  Theory and algorithms, IEEE Transactions on Information Theory
  \textbf{61}(4), 1985 (2015)

\bibitem{netrapalli2013phase}
P.~Netrapalli, P.~Jain, S.~Sanghavi, in \emph{Advances in Neural Information
  Processing Systems} (2013), pp. 2796--2804

\bibitem{wang2016solving}
G.~Wang, G.B. Giannakis, Y.C. Eldar, Solving systems of random quadratic
  equations via truncated amplitude flow, arXiv preprint arXiv:1605.08285
  (2016)

\bibitem{cande2}
Y.~Chen, E.~Candes, in \emph{Advances in Neural Information Processing Systems}
  (2015), pp. 739--747

\bibitem{sun2016geometric}
J.~Sun, Q.~Qu, J.~Wright, in \emph{Information Theory (ISIT), 2016 IEEE
  International Symposium on} (IEEE, 2016), pp. 2379--2383

\bibitem{jaganathan2015phase}
K.~Jaganathan, Y.C. Eldar, B.~Hassibi, Phase retrieval: An overview of recent
  developments, arXiv preprint arXiv:1510.07713  (2015)

\bibitem{yuan2017phase}
Z.~Yuan, Q.~Wang, H.~Wang, Phase retrieval via sparse wirtinger flow, arXiv
  preprint arXiv:1704.03286  (2017)

\bibitem{wang2016sparse}
G.~Wang, L.~Zhang, G.B. Giannakis, M.~Ak{\c{c}}akaya, J.~Chen, Sparse phase
  retrieval via truncated amplitude flow, arXiv preprint arXiv:1611.07641
  (2016)

\bibitem{zhang2009smoothing}
C.~Zhang, X.~Chen, Smoothing projected gradient method and its application to
  stochastic linear complementarity problems, SIAM Journal on Optimization
  \textbf{20}(2), 627 (2009)

\bibitem{kreyszig1989introductory}
E.~Kreyszig, \emph{Introductory functional analysis with applications}, vol.~1
  (wiley New York, 1989)

\bibitem{hunger2007introduction}
R.~Hunger, \emph{An introduction to complex differentials and complex
  differentiability} (Munich University of Technology, Inst. for Circuit Theory
  and Signal Processing, 2007)

\bibitem{zhang2016reshaped}
H.~Zhang, Y.~Liang, in \emph{Advances in Neural Information Processing Systems}
  (2016), pp. 2622--2630

\bibitem{wang2017solving}
G.~Wang, G.B. Giannakis, Y.~Saad, J.~Chen, Solving almost all systems of random
  quadratic equations, arXiv preprint arXiv:1705.10407  (2017)

\bibitem{candes2008introduction}
E.J. Cand{\`e}s, M.B. Wakin, An introduction to compressive sampling, IEEE
  signal processing magazine \textbf{25}(2), 21 (2008)

\bibitem{eriksson2013applied}
K.~Eriksson, D.~Estep, C.~Johnson, \emph{Applied mathematics: Body and soul:
  Volume 1: Derivatives and geometry in IR3} (Springer Science \& Business
  Media, 2013)

\bibitem{vershynin2010introduction}
R.~Vershynin, Introduction to the non-asymptotic analysis of random matrices,
  arXiv preprint arXiv:1011.3027  (2010)

\bibitem{blumensath2009iterative}
T.~Blumensath, M.E. Davies, Iterative hard thresholding for compressed sensing,
  Applied and computational harmonic analysis \textbf{27}(3), 265 (2009)

\bibitem{apostol1974mathematical}
T.M. Apostol, Mathematical analysis  (1974)

\end{thebibliography}

\end{document}